\begin{document}

\title[Fractional Schr\"odinger-Poisson equations]
{Fractional Schr\"odinger-Poisson equations with general nonlinearities}

\author[Ronaldo C. Duarte]{Ronaldo C. Duarte}

%\address{Departamento de Matem\'atica e Estat\'istica,
%         Universidade Federal de Cam\-pi\-na Grande,
%         58429-010, Campina Grande - PB, Brazil}

\email{ronaldocesarduarte@gmail.com}
\email{marco@dme.ufcg.edu.br}

%\address{Departamento de Matem\'atica,
%         Instituto de Ci\^encias Matem\'aticas e de Computa\c{c}\~{a}o,
%         Universidade de S\~ao Paulo,
%         13560-970, S\~ao Carlos-SP, Brazil}

%\email{monari@icmc.usp.br}

\author[Marco A. S. Souto]{Marco A. S. Souto}

\keywords{positive solutions, ground state solutions, periodic potential}

\subjclass{Primary 35J60; Secondary  35J10}

\begin{abstract}
In this paper we investigate the existence of positive solutions and ground state solutions for a class of fractional Schr\"odinger-Poisson equations
in $\mathbb R^3$  with general nonlinearity.
\end{abstract}

\maketitle

\newtheorem{theorem}{Theorem}[section]
\newtheorem{lemma}[theorem]{Lemma}
\newtheorem{proposition}[theorem]{Proposition}
\newtheorem{corollary}[theorem]{Corollary}
\newtheorem{remark}[theorem]{Remark}
\newtheorem{definition}[theorem]{Definition}
\renewcommand{\theequation}{\thesection.\arabic{equation}}
%\catcode`@=11 \@addtoreset{equation}{section}
%\catcode`@=12

\section{Introduction}

In this article we consider the Schr\"odinger-Poisson system
$$
\left\{\begin{array}{lcl}
\left(-\Delta\right)^{s} u +V(x)u+\phi u= f(u),  &\mbox{ in }& \mathbb R^3, \\ \left(-\Delta\right)^{t} \phi=u^2, &\mbox{ in }& \mathbb R^3,
\end{array}\right. \leqno (P)
$$
where $\left(-\Delta\right)^\alpha$ is the fractional Laplacian for $\alpha = s,t$. This paper was motivated by \cite{amsss}. In \cite{amsss} the authors show the existence of positive solutions for the system 
$$
\left\{\begin{array}{lcl}
- \Delta u +V(x)u+\phi u= f(u),  &\mbox{ in }& \mathbb R^3, \\ -\Delta \phi=u^2, &\mbox{ in }& \mathbb R^3,
\end{array}\right.
$$
where $V:\mathbb{R}^3 \to \mathbb R$ is a continuous periodic potential and positive. Our purpose is to show that when we consider this system with fractional Laplacian operator instead of the Laplacian, then we get a positive solution and a ground state solution for the system. We emphasize that we prove the existence of weak solution to the system and without using results of regularity,  we show that the weak solution is positive almost everywhere in $\mathbb{R}^{3}$. To prove this, we present another version of the Logarithmic lemma and we deduce a weak comparison principle for the solution of the system (See Theorem \ref{lm41}).

We will admit that the potential $V$ satisfies,
\begin{enumerate}
\item[$(V_o)$\ ]  $V(x) \geq \alpha >0$, $\forall x \in \mathbb R^3$, for some constant $\alpha >0$,

\item[$(V_1)$\ ]  $V(x)=V(x+y)$,   for all $x \in \mathbb R^3$, $y\in \mathbb Z^3$.\newline
\end{enumerate}
Also, we will assume that
$f\in C(\mathbb R,\mathbb R)$ is a function satisfying:
\begin{enumerate}
\item[$(f_1)$\ ]
$f(u)u>0$, $u \neq 0$;

\item[$(f_2)$\ ]
$\displaystyle\lim_{u\rightarrow 0} \frac{f(u)}{u} = 0$;

\item[$(f_3)$\ ] there exists $p \in (4,2^{*}_{s})$ and $C>0$, such that
$$|f(u)|\leq C(|u|+|u|^{p-1}),$$ for all $u \in \mathbb{R}$, where $2^{*}_{s}=\frac{6}{3-2s}$.

\item[$(f_4)$\ ]
$\displaystyle\lim_{u\rightarrow +\infty} \frac{F(u)}{u^4} =+\infty$, where $F(u)=\int_0^u f(z)dz$;

\item[$(f_5)$\ ]The function
$u \longmapsto \frac{f(u)}{|u|^{3}}$ is increasing 
in $|u|\neq 0$.
\end{enumerate}
We will denote $g(u):=f(u^{+})$ and $G(t)=\int_{0}^{t}g(s)ds$.

The System $(P)$ was studied in \cite{bf}. The author studied the following one dimensional system
$$
\left\{\begin{array}{lcl}
- \Delta u +\phi u= a|u|^{p-1}u,  &\mbox{ in }& \mathbb R, \\ \left(-\Delta\right)^{t} \phi=u^2, &\mbox{ in }& \mathbb R,
\end{array}\right. 
$$
for $p\in(1,5)$ and $t \in (0,1)$. In \cite{zjs2}, the authors show the existence of positive solutions for the system
$$
\left\{\begin{array}{lcl}
- \Delta u + u + \lambda \phi u= f(u),  &\mbox{ in }& \mathbb{R}^{3}, \\ -\Delta \phi= \lambda u^2, &\mbox{ in }& \mathbb R,
\end{array}\right. 
$$
for $\lambda>0$ and general critical nonlinearity, $f$.
In \cite{zjs}, the authors have proved the existence of radial ground state solutions of $(P)$ when $V=0$. In \cite{zhang}, the system was studied, although the sign of the solutions is not considered. In this paper, we prove the existence of positive solutions for system $(P)$. Moreover, by the method of the Nehari manifold, we ensure the existence of a ground state solution for the problem.

Our result is:

\begin{theorem}{\label{fth}}
Suppose that $s \in (\frac{3}{4},1)$, $t \in (0,1)$, $V$ satisfies $(V_o)$ and $(V_1)$, and  $f$  satisfies $(f_1)- (f_5)$. Then the system ($P$) has a positive solution and a ground state solution.
\end{theorem}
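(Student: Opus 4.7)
The plan is to convert $(P)$ into a single nonlocal equation and attack it by a mountain pass / Nehari manifold argument, handling the absence of compactness caused by the periodic potential via a translation argument, and obtaining positivity from the weak comparison principle (Theorem \ref{lm41}) rather than from regularity. First, for each $u \in H^s(\mathbb{R}^3)$ the Poisson equation $(-\Delta)^t \phi = u^2$ has a unique solution
\[
\phi_u(x) = c_t \int_{\mathbb{R}^3} \frac{u^2(y)}{|x-y|^{3-2t}}\,dy
\]
in the homogeneous space $D^{t,2}(\mathbb{R}^3)$; the hypothesis $s > 3/4$ is exactly what is needed to embed $H^s \hookrightarrow L^{12/(3+2t)}$ so that the Coulomb-type term $\int \phi_u u^2\,dx$ is well-defined and of class $C^1$ on $H^s$. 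Substituting, $(P)$ becomes the single equation $(-\Delta)^s u + V(x)u + \phi_u u = g(u)$ (with $g(u)=f(u^+)$), whose weak solutions are critical points of
\[
I(u) = \frac{1}{2}\int_{\mathbb{R}^3}\!\bigl(|(-\Delta)^{s/2}u|^2 + V(x) u^2\bigr)dx + \frac{1}{4}\int_{\mathbb{R}^3}\!\phi_u u^2 \, dx - \int_{\mathbb{R}^3} G(u)\,dx.
\]
Using $g$ instead of $f$ kills the negative part from the start and prepares the positivity argument.

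Next, I would verify the mountain pass geometry: $(f_2)$ and $(f_3)$ give the local well $I(u)\geq \rho>0$ for small $\|u\|$, while $(f_4)$ (the $u^4$ super-quarticity) dominates the Coulomb term for large dilations and provides a point with $I<0$. A standard deformation argument yields a Cerami sequence $(u_n)$ at the mountain pass level $c>0$. For boundedness I would use the identity $I(u_n) - \tfrac{1}{4}\langle I'(u_n),u_n\rangle$: the Coulomb term disappears, and the monotonicity assumption $(f_5)$ (equivalent to the quantity $\tfrac{1}{4}g(u)u - G(u)$ being nonnegative and coercive) bounds $\|u_n\|$.

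Because $V$ is only periodic, no compact embedding is available. I would therefore use a non-vanishing argument à la Lions: either $u_n \to 0$ in $L^q_{\mathrm{loc}}$ for $q\in(2,2^*_s)$, in which case the nonlinear terms vanish and $\|u_n\|\to 0$, contradicting $c>0$; or there exist $y_n \in \mathbb{Z}^3$ and $\delta>0$ such that $\int_{B_1(y_n)} u_n^2 \geq \delta$. By $(V_1)$ the translated sequence $\tilde u_n(x) = u_n(x+y_n)$ is still Cerami, and its weak limit $u \neq 0$ is a nontrivial critical point of $I$. Positivity of $u$ is the delicate step: once we know $u$ solves $(-\Delta)^s u + Vu + \phi_u u = f(u^+)$, testing with $u^-$ one obtains an inequality which, combined with the new logarithmic lemma and the weak comparison principle of Theorem \ref{lm41}, forces $u^-\equiv 0$; a second application, now to $u$ and the zero function, yields $u>0$ almost everywhere without invoking any regularity theory. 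This is the step I expect to be the main obstacle, since the standard $(-\Delta)^s(u^-)\leq -((-\Delta)^s u)\chi_{\{u<0\}}$ type pointwise manipulations are unavailable in the fractional setting.

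Finally, for the ground state I would work on the Nehari manifold $\mathcal{N} = \{u \in H^s\setminus\{0\} : \langle I'(u),u\rangle = 0\}$. Condition $(f_5)$, together with $(f_1)$--$(f_4)$, guarantees that for every $u\neq 0$ the map $t\mapsto I(tu)$ has a unique maximum at some $t_u>0$ and that $\mathcal{N}$ is a $C^1$-manifold homeomorphic to the unit sphere; the strict monotonicity also gives $c_{\mathcal{N}} := \inf_{\mathcal{N}} I = c$, the mountain pass value. Taking a minimizing Cerami sequence on $\mathcal{N}$ and repeating the translation argument above yields $u_0 \in \mathcal{N}$ with $I(u_0) = c_{\mathcal{N}}$, which by the positivity step is a positive ground state solution. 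Piecing together $u_0$ and $\phi_{u_0}$ then produces the desired pair solving $(P)$, completing the proof of Theorem \ref{fth}.
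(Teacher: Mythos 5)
Your proposal follows essentially the same route as the paper: reduction to a single nonlocal equation via $\phi_u$, a mountain pass Cerami sequence bounded through $4I(u_n)-I'(u_n)u_n$ together with $(f_5)$, a Lions-type non-vanishing/$\mathbb{Z}^3$-translation argument exploiting $(V_1)$, positivity obtained from the weak comparison principle and the logarithmic lemma rather than from regularity, and the Nehari identification $c=\inf_{\mathcal{N}}I$ for the ground state. Two small caveats: the hypothesis $s>\frac{3}{4}$ is imposed in the paper to make the interval $(4,2^{*}_{s})$ in $(f_3)$ nonempty (the embedding needed for the Coulomb term only requires $4s+2t\geq 3$), and the ground state should be produced for the functional built with $F$ itself --- with the truncation $g(u)=f(u^{+})$ the Nehari infimum only compares nonnegative competitors, so your claim of a \emph{positive} ground state is slightly stronger than what this argument (or the theorem) actually delivers.
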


The hypothesis  $s \in (\frac{3}{4},1)$ is required to ensure that the interval $(4,2^{\ast}_{s})$ is nondegenerate. 

\begin{remark}{\label{rm1}} The condition $(f_5)$ implies that
 $H(u)=uf(u)-4F(u)$ is a non-negative function.
\end{remark}

In the paper \cite{gsd}, Lemma 2.3, the authors proved another version of the Lions lemma. We will need this lemma to prove our result. It states that:
\begin{lemma}\label{l1.3}
If $\left\{u_{n}\right\}_{n \in \mathbb{N}}$ is a bounded sequence in $H^{s}(\mathbb{R}^{3})$ such that for some $R>0$ and $2\leq q< 2^{\ast}_{s}$ we have
$$
\sup_{x \in \mathbb{R}^{3}}\int_{B_{R}(x)}|u_{n}|^{q} \longrightarrow 0
$$
when $n \rightarrow \infty$, then $u_{n}\rightarrow 0$ in $L^{r}(\mathbb{R}^{3})$ for all $r \in (2,2^{\ast}_{s})$.
\end{lemma}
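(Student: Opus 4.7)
The strategy is a fractional version of Lions's concentration-compactness argument: reduce the hypothesis to the $L^{2}$ case, interpolate between $L^{2}$ and $L^{2^{\ast}_{s}}$ inside each small ball, sum over a bounded-overlap covering of $\mathbb{R}^{3}$, and use the local fractional Sobolev embedding to estimate the non-vanishing factor.

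First I would reduce to $q=2$. Since $B_{R}$ has finite Lebesgue measure, Hölder's inequality gives
$$
\int_{B_{R}(y)}|u_{n}|^{2}\,dx \le |B_{R}|^{(q-2)/q}\Bigl(\int_{B_{R}(y)}|u_{n}|^{q}\,dx\Bigr)^{2/q},
$$
so, writing $\varepsilon_{n}:=\sup_{y\in\mathbb{R}^{3}}\int_{B_{R}(y)}|u_{n}|^{2}\,dx$, we have $\varepsilon_{n}\to 0$. Next, given $r\in(2,2^{\ast}_{s})$, set $\alpha:=(2^{\ast}_{s}-r)/(2^{\ast}_{s}-2)\in(0,1)$. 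Hölder's inequality in each ball with exponents $1/\alpha$ and $1/(1-\alpha)$ gives
$$
\int_{B_{R}(y)}|u_{n}|^{r}\,dx \le \Bigl(\int_{B_{R}(y)}|u_{n}|^{2}\,dx\Bigr)^{\alpha}\Bigl(\int_{B_{R}(y)}|u_{n}|^{2^{\ast}_{s}}\,dx\Bigr)^{1-\alpha}.
$$

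Now cover $\mathbb{R}^{3}$ by $\{B_{R}(y_{i})\}_{i\ge 1}$ with uniformly bounded overlap $\sum_{i}\chi_{B_{R}(y_{i})}\le N$, and set $A_{n,i}:=\int_{B_{R}(y_{i})}|u_{n}|^{2}\,dx$, $B_{n,i}:=\int_{B_{R}(y_{i})}|u_{n}|^{2^{\ast}_{s}}\,dx$. To extract the vanishing factor, I would fix a small $\alpha_{0}\in(0,\alpha)$ and use $A_{n,i}^{\alpha}\le \varepsilon_{n}^{\alpha_{0}}A_{n,i}^{\alpha-\alpha_{0}}$; summing over $i$ and applying Hölder's inequality for series with the conjugate exponents $1/(\alpha-\alpha_{0})$ and $1/(1-\alpha+\alpha_{0})$ produces
$$
\int_{\mathbb{R}^{3}}|u_{n}|^{r}\,dx \le \varepsilon_{n}^{\alpha_{0}}\Bigl(\sum_{i}A_{n,i}\Bigr)^{\alpha-\alpha_{0}}\Bigl(\sum_{i}B_{n,i}^{(1-\alpha)/(1-\alpha+\alpha_{0})}\Bigr)^{1-\alpha+\alpha_{0}}.
$$
The first sum is controlled by $N\|u_{n}\|_{L^{2}}^{2}$. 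For the second, the local fractional Sobolev embedding $\|u_{n}\|_{L^{2^{\ast}_{s}}(B_{R}(y_{i}))}\le C\|u_{n}\|_{H^{s}(B_{R}(y_{i}))}$ converts each summand into a multiple of $\|u_{n}\|_{H^{s}(B_{R}(y_{i}))}^{\kappa}$ with $\kappa:=2^{\ast}_{s}(1-\alpha)/(1-\alpha+\alpha_{0})$. Since $\kappa\to 2^{\ast}_{s}>2$ as $\alpha_{0}\to 0$, I would pick $\alpha_{0}$ small enough that $\kappa\ge 2$; then combining $\|u_{n}\|_{H^{s}(B_{R}(y_{i}))}^{\kappa}\le \|u_{n}\|_{H^{s}(\mathbb{R}^{3})}^{\kappa-2}\|u_{n}\|_{H^{s}(B_{R}(y_{i}))}^{2}$ with the bounded-overlap estimate $\sum_{i}\|u_{n}\|_{H^{s}(B_{R}(y_{i}))}^{2}\le N\|u_{n}\|_{H^{s}(\mathbb{R}^{3})}^{2}$ (valid for both the $L^{2}$ part and the Gagliardo seminorm) bounds the whole factor uniformly in $n$. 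Altogether $\int_{\mathbb{R}^{3}}|u_{n}|^{r}\,dx\le C\varepsilon_{n}^{\alpha_{0}}\to 0$, which is the conclusion.

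The main technical obstacle is precisely the summation step. Because the Hölder exponent $1-\alpha$ is strictly less than $1$, a naive estimate $\sum_{i}B_{n,i}^{1-\alpha}$ over the infinite covering need not converge, and one cannot simply pull the supremum out of the product. The fix is to first split off an arbitrarily small power $\varepsilon_{n}^{\alpha_{0}}$ and use Hölder on sums with carefully chosen conjugate exponents, and then invoke the \emph{local} (rather than global) fractional Sobolev inequality so that the exponent on $\|u_{n}\|_{H^{s}(B_{R}(y_{i}))}$ can be made $\ge 2$; this is what allows the bounded-overlap property and the global $H^{s}$-bound on $\{u_{n}\}$ to close the estimate.
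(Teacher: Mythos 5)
Your argument is correct. Note first that the paper itself gives no proof of this lemma: it is quoted verbatim from Lemma 2.3 of the reference \cite{gsd}, so there is no internal proof to compare against. Your self-contained argument is the standard Lions-type vanishing lemma transplanted to $H^{s}$: reduction to $q=2$ by H\"older on a fixed ball, interpolation between $L^{2}$ and $L^{2^{\ast}_{s}}$ on each ball of a bounded-overlap lattice covering, and the local Sobolev embedding (uniform over translated balls) to close the sum. The one place where you deviate from the textbook version (e.g.\ Willem's Lemma 1.21, which is also what \cite{gsd} adapts) is instructive: the classical proof chooses a \emph{single} exponent $r_{0}$ for which the interpolation weights make the Sobolev factor come out with power exactly $2$, sums, and then recovers all $r\in(2,2^{\ast}_{s})$ by a final interpolation against the uniform $L^{2}$ and $L^{2^{\ast}_{s}}$ bounds; you instead handle every $r$ directly by peeling off the small power $\varepsilon_{n}^{\alpha_{0}}$ and applying H\"older for series with exponents $1/(\alpha-\alpha_{0})$ and $1/(1-\alpha+\alpha_{0})$, then arranging $\kappa=2^{\ast}_{s}(1-\alpha)/(1-\alpha+\alpha_{0})\geq 2$ so that $\|u_{n}\|_{H^{s}(B_{R}(y_{i}))}^{\kappa}\leq\|u_{n}\|_{H^{s}}^{\kappa-2}\|u_{n}\|_{H^{s}(B_{R}(y_{i}))}^{2}$ restores summability. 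All exponent bookkeeping checks out ($\alpha_{0}<\alpha$ and $\alpha_{0}\leq(2^{\ast}_{s}-2)(1-\alpha)/2$ are simultaneously achievable), and the bounded-overlap estimate does control both the $L^{2}$ part and the Gagliardo double integral, since each pair $(x,y)$ lies in at most $N$ of the product sets $B_{R}(y_{i})\times B_{R}(y_{i})$. The price of your route is a slightly heavier exponent computation; what it buys is that no second interpolation step is needed at the end.
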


\section{Some preliminary results}

Let $s \in (0,1)$, we denote by $\dot{H}^{s}(\mathbb{R}^{3})$ the homogeneous fractional space. It is defined as the completion of $C_{0}^{\infty}(\mathbb{R}^{3})$ under the norm
$$
||u||_{\dot{H}^{s}}=\left(\int_{\mathbb{R}^{3}}\int_{\mathbb{R}^{3}}\frac{(u(x)-u(y))^{2}}{|x-y|^{3+2s}}dxdy\right)^{\frac{1}{2}}
$$
and we define 
$$
H^{s}(\mathbb{R}^{3}):=\left\{u \in L^{2}(\mathbb{R}^{3});\int_{\mathbb{R}^{3}}\int_{\mathbb{R}^{3}}\frac{(u(x)-u(y))^{2}}{|x-y|^{3+2s}}dxdy<\infty \right\}.
$$ 
The space $H^{s}(\mathbb{R}^{3})$ is a Hilbert space with the norm
$$
||u||_{H^{s}}=\left(\int_{\mathbb{R}^{3}}|u|^{2}dx+\int_{\mathbb{R}^{3}}\int_{\mathbb{R}^{3}}\frac{(u(x)-u(y))^{2}}{|x-y|^{3+2s}}dxdy\right)^{\frac{1}{2}}
$$
We define the fractional Laplacian operator
$\left(-\Delta\right)^{s}:\dot{H}^{s}(\mathbb{R}^{3})\longrightarrow (\dot{H}^{s}(\mathbb{R}^{3}))'$ by  $(\left(-\Delta\right)^{s}u,v)=\frac{\zeta}{2}(u,v)_{\dot{H}^{s}}$, where
$
\zeta=\zeta(s)=\left(\int_{\mathbb{R}^{3}}\frac{1-cos(\xi_{1})}{|\xi|^{3+2s}}d \xi \right)^{-1}
$ and $(\cdot,\cdot)_{H^{s}}$ is an inner product of $H^{s}(\mathbb{R}^{3})$. The constant $\zeta$ satisfies 
$$
\int_{\mathbb{R}^{3}}\int_{\mathbb{R}^{3}}\frac{(u(x)-u(y))(v(x)-v(y))}{|x-y|^{3+2s}}dxdy=2\zeta^{-1}\int_{\mathbb{R}^{3}}|\xi|^{2s}\mathcal{F}u(\xi) \overline{\mathcal{F}v(\xi)}d \xi,
$$
where $\mathcal{F}u$ is the Fourier transform of $u$ (see Proposition 3.4 of \cite{dpv}). The fractional Laplacian operator is a bounded linear operator.

A pair $(u, \phi_{u})$ is a solution of $(P)$ if
$$
\frac{\zeta(t)}{2}\int_{\mathbb{R}^{3}}\int_{\mathbb{R}^{3}}\frac{(\phi_{u}(x)-\phi_{u}(y))(w(x)-w(y))}{|x-y|^{3+2t}}dxdy = \int_{\mathbb{R}^{3}}u^{2}wdx.
$$
for all $w \in \dot{H}^{t}(\mathbb{R}^{3})$, and
$$
(\left(-\Delta\right)^{s}u,v)+\int_{\mathbb{R}^{3}}V(x)uvdx+\int_{\mathbb{R}^{3}}\phi_{u}uvdx=\int_{\mathbb{R}^{3}}f(u)vdx
$$
for all $v\in H^{s}(\mathbb{R}^{3})$.

Let us recall some facts about the Schr\"odinger-Poisson equations (see \cite{Ruiz,ap,zz,G} for instance). We can transform $(P)$ into a fractional Schr\"odinger problem with a nonlocal term. For all $u\in H^{s}(\mathbb R^3)$, there exists a unique $\phi=\phi_u \in \dot{H}^{t}(\mathbb R^3)$ such that
$$
\left(-\Delta\right)^{t} \phi=u^2.
$$
In fact, since $H^{s}(\mathbb R^3)\hookrightarrow L^{\frac{22^{\ast}_{t}}{2^{\ast}_{t}-1}}(\mathbb R^3)$ (continuously), a simple application of the Lax-Milgram theorem shows that $\phi_u$ is well defined and
$$
||\phi_{u}||_{\dot{H}^{t}}^{2}\leq S^2 ||u||^4_{\frac{22^{\ast}_{t}}{2^{\ast}_{t}-1}},
$$
where $||.||_p$ denotes the $L^p(\mathbb R^3)$ norm and $S$ is the best constant of the Sobolev immersion $H^s(\mathbb R^3) \rightarrow L^{2^{\ast}_{t}}(\mathbb R^3)$, that is
$$
S= \inf_{u \in \dot{H}^{t}(\mathbb{R}^{3})\setminus \left\{0\right\}}\frac{||u||_{\dot{H}^{t}}^{2}}{||u||_{2^{\ast}_{t}}^{2}}.
$$

\vspace{1mm}

\begin{lemma}{\label{lm1}}
We have:

 $i)$ there exists $C>0$ such that $||\phi_u||_{\dot{H}^{t}}\leq C||u||_{H^{s}}^2$ and
$$
\int_{\mathbb{R}^{3}}\int_{\mathbb{R}^{3}}\frac{(\phi_{u}(x)-\phi_{u}(y))^{2}}{|x-y|^{3+2t}}dxdy \leq C||u||_{{H}^{s}}^{4}
$$ for all $u\in H^s(\mathbb R^3)$;

$ii)$ $\phi_u\geq 0$, $\forall u\in H^s(\mathbb R^3)$;

$iii)$ $\phi_{tu}=t^2\phi_u$, $\forall t>0, u\in H^s(\mathbb R^3)$.

$iv)$ If $\tilde{u}(x):=u(x+z)$ then $\phi_{\tilde{u}}(x) = \phi_{u}(x+z)$ and
$$
\int_{\mathbb{R}^{3}}\phi_{u}u^{2}dx = \int_{\mathbb{R}^{3}}\phi_{\tilde{u}}\tilde{u}^{2}dx.
$$ for all $z \in \mathbb{R}^{3}$ and $u \in H^{s}(\mathbb{R}^{3})$.

$v)$ 
If $\left\{u_{n}\right\}$  converges weakly to $u$ in $H^{s}(\mathbb{R}^{3})$, then $\left\{\phi_{u_{n}}\right\}$  converges weakly to $\phi_{u}$ in $\dot{H}^{t}(\mathbb{R}^{3})$.
\end{lemma}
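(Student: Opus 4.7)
The plan is to exploit the uniqueness of $\phi_u$ as the Lax--Milgram solution of $(-\Delta)^t\phi = u^2$ for items (i)--(iv), and local compactness of Sobolev embeddings for (v). Throughout I will use $\lambda$ (rather than $t$) for the scaling parameter in (iii) to avoid clashing with the fractional order.

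For (i), I would test the defining identity for $\phi_u$ against $w=\phi_u$ itself, obtaining
\[
 \|\phi_u\|_{\dot H^t}^2 \;=\; \tfrac{2}{\zeta(t)}\int_{\mathbb R^3} u^2\phi_u\,dx.
\]
H\"older's inequality with dual exponents $\frac{2^*_t}{2^*_t-1}$ and $2^*_t$, combined with the Sobolev embedding $\dot H^t\hookrightarrow L^{2^*_t}$, yields $\|\phi_u\|_{\dot H^t}\leq C\|u\|_{2\cdot 2^*_t/(2^*_t-1)}^2$, and the continuous embedding $H^s\hookrightarrow L^{2\cdot 2^*_t/(2^*_t-1)}$ already noted in the excerpt completes both inequalities in (i) (they differ only by the factor $\zeta(t)/2$).

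Items (ii)--(iv) are short consequences of structural properties of the equation. For (ii), I would use the Riesz kernel representation $\phi_u(x)=c_{3,t}\int_{\mathbb R^3} u^2(y)/|x-y|^{3-2t}\,dy$, which is manifestly nonnegative; alternatively, testing against $\phi_u^-\in\dot H^t$ gives $\|\phi_u^-\|_{\dot H^t}^2\leq 0$. For (iii), by linearity $\lambda^2\phi_u$ solves $(-\Delta)^t(\lambda^2\phi_u)=(\lambda u)^2$, so uniqueness forces $\phi_{\lambda u}=\lambda^2\phi_u$. For (iv), translation invariance of $(-\Delta)^t$ shows that $x\mapsto \phi_u(x+z)$ solves $(-\Delta)^t\psi = \tilde u^2$, and uniqueness identifies it with $\phi_{\tilde u}$; the integral equality then follows from a change of variables.

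The main effort lies in (v). Weak convergence $u_n\rightharpoonup u$ in $H^s$ implies the sequence is bounded, so by (i) $\{\phi_{u_n}\}$ is bounded in $\dot H^t$, and up to a subsequence $\phi_{u_n}\rightharpoonup \psi$ in $\dot H^t$. To identify $\psi=\phi_u$, fix $w\in C_0^\infty(\mathbb R^3)$ and pass to the limit in
\[
 \tfrac{\zeta(t)}{2}\langle \phi_{u_n},w\rangle_{\dot H^t} \;=\; \int_{\mathbb R^3} u_n^2 w\,dx.
\]
The left-hand side converges to $\tfrac{\zeta(t)}{2}\langle\psi,w\rangle_{\dot H^t}$ by weak convergence. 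For the right-hand side, set $K=\operatorname{supp} w$; the compact embedding $H^s(K)\hookrightarrow L^2(K)$ gives $u_n\to u$ in $L^2(K)$, and then $|u_n^2-u^2|\leq |u_n-u|(|u_n|+|u|)$ together with Cauchy--Schwarz and the uniform $L^2$ bound yields $u_n^2\to u^2$ in $L^1(K)$. Hence $\langle\psi,w\rangle_{\dot H^t}=\langle\phi_u,w\rangle_{\dot H^t}$, and density of $C_0^\infty$ in $\dot H^t$ forces $\psi=\phi_u$. Since every weakly convergent subsequence has this same limit, the whole sequence converges weakly. The most delicate point is precisely the passage $u_n^2\to u^2$ localized to supports of test functions, which is the only place where the weak (rather than strong) nature of the hypothesis is felt.
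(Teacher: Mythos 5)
The paper does not actually prove this lemma; it only remarks that the argument is analogous to the classical Poisson case and points to the references. Your proof supplies the standard argument intended there (testing with $\phi_u$ plus H\"older/Sobolev for (i), the Riesz representation or the $\phi_u^-$ test for (ii), uniqueness in Lax--Milgram for (iii)--(iv), and local compactness of $H^s\hookrightarrow L^2_{loc}$ to identify the weak limit in (v)) and is correct; the only cosmetic slip is the parenthetical in (i) --- with the paper's normalization of $\|\cdot\|_{\dot{H}^{t}}$ the displayed double integral is exactly $\|\phi_u\|_{\dot{H}^{t}}^{2}$, so the second estimate is just the square of the first, not off by a factor $\zeta(t)/2$.
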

The proof is analogous to the case of Poisson equation in $\mathcal{D}^{1,2}(\mathbb{R}^{3})$ (See \cite{amsss, Ruiz, zz}). 

At first, we are interested in showing the existence of a positive solution for $(P)$. We will consider the following Euler-Lagrange functional
$$
\begin{array}{cccl}
I:&H^{s}(\mathbb{R}^{3})&\longrightarrow&\mathbb{R}\\
&u& \longmapsto &
\begin{array}{ll}\frac{\zeta(s)}{4}\int_{\mathbb{R}^{3}}\int_{\mathbb{R}^{3}}&\frac{(u(x)-u(y))^{2}}{|x-y|^{3+2s}}dxdy+\frac{1}{2}\int_{\mathbb{R}^{3}}V(x)u^{2}dx\\
&+\frac{1}{4}\int_{\mathbb{R}^{3}}\phi_{u}u^{2}dx-\int_{\mathbb{R}^{3}}G(u)dx,
\end{array}
\end{array}
$$
whose derivative is
$$
\begin{array}{ll}
I^{'}(u)(v)&=\frac{\zeta}{2}\int_{\mathbb{R}^{3}}\int_{\mathbb{R}^{3}}\frac{(u(x)-u(y))(v(x)-v(y))}{|x-y|^{3+2s}}dxdy\\
& +\int_{\mathbb{R}^{3}}V(x)uvdx
+\int_{\mathbb{R}^{3}}\phi_{u}u vdx-\int_{\mathbb{R}^{3}}g(u)vdx \\
&= (\left(-\Delta\right)^{s}u,v)+\int_{\mathbb{R}^{3}}V(x)uvdx
+\int_{\mathbb{R}^{3}}\phi_{u}u vdx-\int_{\mathbb{R}^{3}}g(u)vdx.
\end{array}
$$
Remark that critical points of $I$ determine solutions for $P$.

\begin{lemma}
The function 
$$
u \longmapsto ||u||:=\left(\frac{\zeta(s)}{2}\int_{\mathbb{R}^{3}}\int_{\mathbb{R}^{3}}\frac{(u(x)-u(y))^{2}}{|x-y|^{3+2s}}dxdy + \int_{\mathbb{R}^{3}}V(x)u^{2}dx\right)^{\frac{1}{2}}
$$
defines a norm in $H^{s}(\mathbb{R}^{3})$ wich is equivalent to the standard norm.
\end{lemma}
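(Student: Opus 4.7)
The plan is to show both that $\|\cdot\|$ is a bona fide norm and that it is two-sided comparable to the standard norm $\|\cdot\|_{H^s}$. The argument reduces to observing that the coefficients $\zeta(s)/2$ and $V(x)$ multiplying the Gagliardo seminorm and the $L^{2}$ norm are bounded above and below by positive constants, so the quadratic forms defining the two norms are uniformly equivalent.

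First I would check that $\|\cdot\|$ really is a norm. Positivity and homogeneity are immediate from the formula; the triangle inequality follows because $\|\cdot\|$ is induced by the symmetric bilinear form
\[
\langle u,v\rangle=\frac{\zeta(s)}{2}\int_{\mathbb{R}^{3}}\int_{\mathbb{R}^{3}}\frac{(u(x)-u(y))(v(x)-v(y))}{|x-y|^{3+2s}}dxdy+\int_{\mathbb{R}^{3}}V(x)uv\,dx,
\]
which is positive definite on $H^{s}(\mathbb{R}^{3})$: if $\|u\|=0$, hypothesis $(V_{o})$ forces $\int V u^{2}\ge \alpha\int u^{2}=0$, so $u=0$ a.e.

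Second, I would produce the equivalence. The key auxiliary fact is that $V$ is bounded above: since $V$ is continuous on $\mathbb{R}^{3}$ and $(V_{1})$ makes it $\mathbb{Z}^{3}$-periodic, restricting to the compact fundamental cube $[0,1]^{3}$ gives $\beta:=\sup_{\mathbb{R}^{3}}V=\max_{[0,1]^{3}}V<\infty$. Combined with $(V_{o})$ one has $\alpha\le V(x)\le\beta$ pointwise, so
\[
\alpha\int_{\mathbb{R}^{3}}u^{2}dx\le\int_{\mathbb{R}^{3}}V(x)u^{2}dx\le\beta\int_{\mathbb{R}^{3}}u^{2}dx.
\]
Setting $c_{0}:=\min\{\zeta(s)/2,\alpha\}>0$ and $C_{0}:=\max\{\zeta(s)/2,\beta\}<\infty$ and adding this to the trivial bounds for the Gagliardo term yields
\[
c_{0}\|u\|_{H^{s}}^{2}\le \|u\|^{2}\le C_{0}\|u\|_{H^{s}}^{2}\qquad\text{for every }u\in H^{s}(\mathbb{R}^{3}),
\]
which is the desired equivalence.

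There is no real obstacle here; the only point that requires a brief remark rather than a one-line computation is the uniform upper bound on $V$, which uses continuity together with periodicity. Everything else is a direct comparison of the two quadratic forms term by term.
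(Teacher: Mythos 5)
Your proof is correct. The paper in fact omits the proof entirely (it declares the lemma ``trivial''), so there is no argument to compare against; your write-up is the standard one and supplies exactly what is missing. The one point worth flagging is that your upper bound $V\le\beta$ relies on $V$ being continuous, which is not listed among $(V_o)$--$(V_1)$; however, continuity of $V$ is a standing (if implicit) hypothesis of the paper -- it is invoked explicitly in the proof of Corollary~\ref{cl45} -- and some upper bound on $V$ is genuinely indispensable here, since otherwise $\int_{\mathbb{R}^{3}}V(x)u^{2}\,dx$ need not even be finite on all of $H^{s}(\mathbb{R}^{3})$. So your isolation of that step as the only non-trivial ingredient is exactly right.
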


The proof of the previous lemma is trivial and therefore we will omit it in this paper. %\textcolor{green}{(In this proof, we use $V$ is limited. This is a consequence of the fact that $V$ is continuous and periodic. - retirar isto?)}
\section{Existence of the Solution}

\begin{theorem}\label{th31}
Suppose that $1>s> \frac{3}{4}$, $t \in (0,1)$, $V$ satisfies $(V_{0}), (V_{1})$ and $f$ satisfies $(f_{1})-(f_{4})$. Then, the problem $(P)$ has a nontrivial solution.
\end{theorem}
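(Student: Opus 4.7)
The plan is to apply the Mountain Pass Theorem to the functional $I$ and then exploit the $\mathbb{Z}^{3}$-periodicity of $V$ to recover enough compactness to pass to a nontrivial weak limit. Since $g(u) = f(u^{+})$, a critical point of $I$ is automatically nonnegative (true positivity is the topic of a later section), so any nontrivial critical point of $I$ supplies the solution required by the statement.

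\emph{Mountain-pass geometry.} From $(f_2)$--$(f_3)$ one has $|G(u)| \le \varepsilon u^{2} + C_{\varepsilon}|u|^{p}$ with $p \in (4, 2^{\ast}_{s})$; together with $\phi_{u} \ge 0$ this yields $I(u) \ge \beta > 0$ on a small sphere $\|u\| = \rho$. For the far end of the path, fix any $u_{0} \in H^{s}(\mathbb{R}^{3})$ with $u_{0} \ge 0$, $u_{0} \not\equiv 0$: by Lemma 2.2(iii) the nonlocal term scales as $\int \phi_{tu_{0}}(tu_{0})^{2} = t^{4}\int \phi_{u_{0}} u_{0}^{2}$, while $(f_4)$ forces $\int G(tu_{0})/t^{4} \to +\infty$, so $I(tu_{0}) \to -\infty$. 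The Mountain Pass Theorem then delivers a Palais--Smale sequence $\{u_{n}\}$ at some level $c > 0$.

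\emph{Boundedness and compactness.} Boundedness follows from the computation
$$
4I(u_{n}) - I'(u_{n})u_{n} = \|u_{n}\|^{2} + \int_{\mathbb{R}^{3}} \bigl( g(u_{n})u_{n} - 4G(u_{n}) \bigr) dx,
$$
in which the second integrand is nonnegative by Remark 1.1, yielding $\|u_{n}\|^{2} \le 4c + o(1) + o(1)\|u_{n}\|$. Now I apply Lemma 1.3. If the sequence vanishes then $u_{n} \to 0$ in $L^{r}(\mathbb{R}^{3})$ for all $r \in (2, 2^{\ast}_{s})$, which together with the bound $\int \phi_{u_{n}}u_{n}^{2} \le C\|u_{n}\|_{H^{s}}^{4}$ from Lemma 2.2(i) forces $I(u_{n}) \to 0$, contradicting $c > 0$. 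Hence there exist $\delta, R > 0$ and $y_{n} \in \mathbb{R}^{3}$ with $\int_{B_{R}(y_{n})}|u_{n}|^{2} \ge \delta$. By $(V_1)$ I may replace $y_{n}$ by a nearest lattice point in $\mathbb{Z}^{3}$; by Lemma 2.2(iv) and translation invariance of the fractional seminorm, the shifted sequence $\tilde u_{n}(x) := u_{n}(x+y_{n})$ is still a PS sequence at level $c$ with the same norm. Passing to a subsequence, $\tilde u_{n} \rightharpoonup u$ in $H^{s}(\mathbb{R}^{3})$ with $u \not\equiv 0$, the latter by local strong $L^{2}$-convergence.

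\emph{Passing to the weak limit, and the main obstacle.} It remains to show $I'(u) = 0$. For $v \in C_{c}^{\infty}(\mathbb{R}^{3})$ the fractional-seminorm, potential, and $g$-terms in $I'(\tilde u_{n})(v)$ pass to the limit by standard arguments using $(f_2)$, $(f_3)$ and local strong convergence in $L^{q}$ for $q < 2^{\ast}_{s}$. The step I expect to be the main obstacle is the nonlocal Coulomb-type term: one needs $\int \phi_{\tilde u_{n}} \tilde u_{n} v \, dx \to \int \phi_{u} u v \, dx$. This is handled by Lemma 2.2(v), which gives $\phi_{\tilde u_{n}} \rightharpoonup \phi_{u}$ in $\dot H^{t}(\mathbb{R}^{3})$ and hence weakly in $L^{2^{\ast}_{t}}$, combined with $\tilde u_{n} v \to u v$ strongly in $L^{(2^{\ast}_{t})'}$ (from compact support of $v$ and Rellich). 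Therefore $I'(u) = 0$, and $u$ is the required nontrivial solution of $(P)$.
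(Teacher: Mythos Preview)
Your argument follows essentially the same route as the paper's: mountain-pass geometry, boundedness from $4I(u_n)-I'(u_n)u_n\ge\|u_n\|^2$ via Remark~1.1, the Lions-type dichotomy of Lemma~1.3, and $\mathbb{Z}^{3}$-translation (using $(V_1)$ and Lemma~2.2(iv)) to recover a nontrivial weak limit that is a critical point. One small correction in the vanishing step: the upper bound $\int\phi_{u_n}u_n^{2}\le C\|u_n\|_{H^{s}}^{4}$ from Lemma~2.2(i) by itself does not force $I(u_n)\to 0$; what you need (and what the paper uses) is $I'(u_n)u_n=o(1)$ together with $\phi_{u_n}\ge 0$ from Lemma~2.2(ii), which gives $\|u_n\|^{2}\le \int g(u_n)u_n\,dx+o(1)\to 0$ and hence $I(u_n)\to 0$.
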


\begin{proof}
By usual arguments, we prove that the functional $I$ has the mountain pass geometry. By Montain Pass theorem, there is a Cerami's sequence for $I$ at the mountain pass level c. That is, there is $\left\{u_{n}\right\}_{n \in \mathbb{N}} \subset H^{s}(\mathbb{R}^{3})$ such that
$$
I(u_{n})\rightarrow c
$$
and
$$
(1+||u_{n}||)I^{'}(u_{n}) \rightarrow 0.
$$
where 
$$
c= \inf_{\gamma \in \Gamma}\sup_{t \in [0,1]}I(\gamma(t))
$$
and
$$
\Gamma=\left\{\gamma \in C([0,1], H^{s}(\mathbb{R}^{3}));\gamma(0)=0, \gamma(1)=e \right\},
$$
where $e \in H^{s}(\mathbb{R}^{3})$, and $e$ satisfies $I(e)<0$. By Remark \ref{rm1}
$$
\begin{array}{ll}
 4I(u_{n})-I'(u_{n})u_{n}
	&=||u_{n}||^{2}+\int_{\mathbb{R}^{3}}[f(u_{n})u_{n}-4F(u_{n})]dx \\
& \geq ||u_{n}||^{2}
\end{array}
$$
Therefore $\left\{u_{n}\right\}$ is bounded in $H^{s}(\mathbb{R}^{3})$. So, there is $u \in H^{s}(\mathbb{R}^{3})$ such that $\left\{u_{n}\right\}$ converges weakly to $u$. The Lemma \ref{lm1}, $(f_{2})$, and $(f_{3})$ imply that $u$ is a critical point for $I$. If $u \neq 0$ then $u$ is a nontrivial solution for 
$(P)$. Suppose that $u = 0$. 
We claim that $\{u_{n}\}$ does not converge to $0$ in $L^{r}(\mathbb{R}^{3})$ for all $r \in (2,2^{\ast}_{s})$. Indeed, otherwise, by $(f_{2})$, $(f_{3})$ and the boundedness of $\{u_{n}\}$ in  $L^{2}(\mathbb{R}^{3})$ 
we have
$$
\int_{\mathbb{R}^{3}}g(u_{n})u_{n}dx \rightarrow 0;
$$
By Lemma \ref{lm1}
$$
\begin{array}{ll}
||u_{n}||^{2}& \leq ||u_{n}||^{2}+\int_{\mathbb{R}^{3}}\phi_{u_{n}}u_{n}^{2}dx \\
& = \int_{\mathbb{R}^{3}}g(u_{n})u_{n}dx + I'(u_{n})u_{n}.
\end{array}
$$
The right side of this last inequality converges to $0$. In this case, $u_{n}\rightarrow 0$ in $H^{s}(\mathbb{R}^{3})$. Consequently
$$
c=\lim I(u_{n})=0.
$$
This last equality can not occur. Then, we can  assume that there are $R>0$ and $\delta>0$ such that passing to a subsequence if necessary
$$
\int_{B_{R}(y_{n})}u_{n}^{2}dx\geq \delta,
$$
for some sequence $\{y_{n}\} \subset \mathbb{Z}^{3}$ (See Lemma \ref{l1.3}).
For each $n \in \mathbb{N}$, we define
$$
w_{n}(x):=u_{n}(x+y_{n}).
$$
Note that $w_{n} \in H^{s}(\mathbb{R}^{3})$. Moreover, changing the variables in the integral below, we have
$$
\begin{array}{ll}
I(w_{n})
&= \begin{array}{ll}\frac{\zeta}{4}\int_{\mathbb{R}^{3}}\int_{\mathbb{R}^{3}}&\frac{(u_{n}(x+y_{n})-u_{n}(y+y_{n}))^{2}}{|(x+y_{n})-(y+y_{n})|^{3+2s}}dxdy+\frac{1}{2}\int_{\mathbb{R}^{3}}V(x)u_{n}(x+y_{n})^{2}dx\\
&+\frac{1}{4}\int_{\mathbb{R}^{3}}\phi_{w_{n}}w_{n}^{2}dx-\int_{\mathbb{R}^{3}}G(u_{n}(x+y_{n}))dx
\end{array} \\
& = \begin{array}{ll}\frac{\zeta}{4}\int_{\mathbb{R}^{3}}\int_{\mathbb{R}^{3}}&\frac{(u_{n}(z)-u_{n}(w))^{2}}{|z-w|^{3+2s}}dzdw+\frac{1}{2}\int_{\mathbb{R}^{3}}V(z)u_{n}(z)^{2}dz\\
&+\frac{1}{4}\int_{\mathbb{R}^{3}}\phi_{u_{n}}u_{n}^{2}dx-\int_{\mathbb{R}^{3}}G(u_{n}(z))dz. 
\end{array}\\
& = I(u_{n}).
\end{array}
$$
Analogously, for every $\phi \in H^{s}(\mathbb{R}^{3})$
$$
\begin{array}{ll}
I'(w_{n})\phi&=\begin{array}{ll}\frac{\zeta}{2}\int_{\mathbb{R}^{3}}\int_{\mathbb{R}^{3}}&\frac{(w_{n}(x)-w_{n}(y))(\phi(x)-\phi(y))}{|x-y|^{3+2s}}dxdy+\int_{\mathbb{R}^{3}}V(x)w_{n}\phi dx\\
&+\int_{\mathbb{R}^{3}}\phi_{w_{n}}w_{n}\phi dx-\int_{\mathbb{R}^{3}}g(w_{n})\phi dx 
\end{array} \\
& = \begin{array}{ll}\frac{\zeta}{2}\int_{\mathbb{R}^{3}}\int_{\mathbb{R}^{3}}&\frac{(u_{n}(x+y_{n})-u_{n}(y+y_{n}))(\phi(x)-\phi(y))}{|(x+y_{n})-(y+y_{n})|^{3+2s}}dxdy\\&+\int_{\mathbb{R}^{3}}V(x+y_{n})u_{n}(x+y_{n})\phi(x) dx\\
&+\int_{\mathbb{R}^{3}}\phi_{u_{n}}(x+y_{n})u_{n}(x+y_{n})\phi dx\\
&-\int_{\mathbb{R}^{3}}g(u_{n}(x+y_{n}))\phi(x) dx 
\end{array} \\

& = \begin{array}{ll}\frac{\zeta}{2}\int_{\mathbb{R}^{3}}\int_{\mathbb{R}^{3}}&\frac{(u_{n}(z)-u_{n}(w)(\phi(z-y_{n})-\phi(w-y_{n}))}{|z-w|^{3+2s}}dzdw\\
&+\int_{\mathbb{R}^{3}}V(z)u_{n}(z)\phi(z-y_{n}) dz\\
&+\int_{\mathbb{R}^{3}}\phi_{u_{n}}(z)u_{n}(z)\phi(z-y_{n}) dz\\
&-\int_{\mathbb{R}^{3}}g(u_{n}(z))\phi(z-y_{n}) dz 
\end{array} \\

& = I'(u_{n})\overline{\phi}
\end{array}
$$
where $\overline{\phi}(x)=\phi(x-y_{n})$. This implies that $\{w_{n}\}$ is a Cerami's sequence for $I$ at the level $c$. Analogously, we can show that $\{w_{n}\}$ is bounded, $\{w_{n}\}$ converges weakly to some $w_{0}\in H^{s}(\mathbb{R}^{3})$ and that $I'(w_{0})=0$. Passing to a subsequence, if necessary, we can assume that $\{w_{n}\}$ converges on $L^{2}_{loc}(\mathbb{R}^{3})$ to $w_{0}$. Then
$$
\begin{array}{ll}
\int_{B_{R}(0)}w_{0}^{2}dx & = \lim\limits_{n \rightarrow \infty}\int_{B_{R}(0)}w_{n}^{2}dx \\
& = \lim\limits_{n \rightarrow \infty}\int_{B_{R}(0)}u_{n}(x+y_{n})^{2}dx \\
& =  \lim\limits_{n \rightarrow \infty}\int_{B_{R}(y_{n})}u_{n}(z)^{2}dz \geq \delta.
\end{array}
$$
Therefore, $w_{0}$ is a nontrivial solution for $(P)$. Thus, if $u=0$ we prove that there is a critical point for $I$, that is nontrivial.
\end{proof}

\section{Positivity of the solution}

In this section, we will prove that the solution of Theorem \ref{th31} is positive. Initially, we will prove a version of Logarithmic lemma. The Logarithmic lemma was presented by Di Castro, Kuusi and Palatucci. (lemma 1.3 of \cite{dkp}). In the Logarithmic lemma, the authors give an estimate for weak solutions of the equation
$$
\left\{\begin{array}{rcccc}
\left(-\Delta_{p}\right)^{s}u&=&0&in& \Omega \\
u&=&g&in& \mathbb{R}^{n}\setminus\Omega
\end{array}
\right.
$$
in $B_{r}(x_{0})\subset B_{\frac{R}{2}}(x_{0}) \subset \Omega$, for $x_{0}\in \Omega$ and $u \geq 0$ in $B_{R}(x_{0})$. Following the ideas from Di Castro, Kuusi and Palatucci, we will show a similar estimate for a supersolution of the problem
$$
\begin{array}{lll}
\left(-\Delta\right)^{s}u+a(x)u=0&in &\mathbb{R}^{n}
\end{array}
$$
(See Lemma \ref{lm41} bellow). Supersolutions are defined in the following way
$$
\int_{\mathbb{R}^{n}}\int_{\mathbb{R}^{n}}\frac{(u(x)-u(y))\left(v(x)-v(y)\right)}{|x-y|^{n+2s}}dxdy+ \int_{\mathbb{R}^{n}}a(x)u(x)v(x)dx\geq 0,
$$
for all $v \in H^{s}(\mathbb{R}^{n})$ with $v \geq 0$ almost everywere. Also, in this situation, we need not to assume that $u\geq0$ in some subset of $\mathbb{R}^{n}$.
With this estimate, we conclude that the supersolution satisfies $u > 0$ almost everywere in $\mathbb{R}^{3}$ or $u=0$ almost everywere in $\mathbb{R}^{3}$.

\begin{lemma}\label{lm41}
Suppose that $a:\mathbb{R}^{n}\rightarrow \mathbb{R}$ is a nonnegative function and $u \in H^{s}(\mathbb{R}^{n})$. If 
$$
\int_{\mathbb{R}^{n}}\int_{\mathbb{R}^{n}}\frac{(u(x)-u(y))\left(v(x)-v(y)\right)}{|x-y|^{n+2s}}dxdy+ \int_{\mathbb{R}^{n}}a(x)u(x)v(x)dx\geq 0.
$$
for all $v \in H^{s}(\mathbb{R}^{n})$ with $v \geq 0$ almost everywere, then $u \geq 0$ almost everywere. In other words, if $\left(-\Delta\right)^{s}u+a(x)u\geq0$ then $u \geq 0$ almost everywere.
\end{lemma}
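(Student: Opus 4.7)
The plan is to test the inequality against $v = u^{-}$, the negative part of $u$, and show that the resulting expression forces the Gagliardo seminorm of $u^{-}$ to vanish, hence $u^{-} \equiv 0$.

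First I would verify that $u^{-} \in H^{s}(\mathbb{R}^{n})$ so that it is an admissible test function. Since $x \mapsto x^{-}$ is $1$-Lipschitz, one has the pointwise bound $|u^{-}(x) - u^{-}(y)| \leq |u(x) - u(y)|$, which gives finiteness of the Gagliardo seminorm of $u^{-}$, and $u^{-} \in L^{2}(\mathbb{R}^{n})$ since $u \in L^{2}(\mathbb{R}^{n})$. Clearly $u^{-} \geq 0$ a.e.

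Next I would use the decomposition $u = u^{+} - u^{-}$ to rewrite the kernel in the first integral. Expanding,
$$
(u(x)-u(y))(u^{-}(x)-u^{-}(y)) = (u^{+}(x)-u^{+}(y))(u^{-}(x)-u^{-}(y)) - (u^{-}(x)-u^{-}(y))^{2}.
$$
A case analysis using $u^{+} u^{-} = 0$ shows that $(u^{+}(x)-u^{+}(y))(u^{-}(x)-u^{-}(y)) \leq 0$ pointwise: indeed, if $u^{+}(x) > 0$ then $u^{-}(x) = 0$, and so on for each configuration of signs of $u(x), u(y)$. Therefore
$$
\int_{\mathbb{R}^{n}}\int_{\mathbb{R}^{n}} \frac{(u(x)-u(y))(u^{-}(x)-u^{-}(y))}{|x-y|^{n+2s}}\,dx\,dy \leq -\int_{\mathbb{R}^{n}}\int_{\mathbb{R}^{n}} \frac{(u^{-}(x)-u^{-}(y))^{2}}{|x-y|^{n+2s}}\,dx\,dy.
$$
For the potential term I would use $u(x) u^{-}(x) = -(u^{-}(x))^{2}$, giving $\int a(x) u(x) u^{-}(x)\,dx = -\int a(x)(u^{-}(x))^{2}\,dx$.

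Substituting both bounds into the hypothesis with $v = u^{-}$ yields
$$
0 \leq -\int_{\mathbb{R}^{n}}\int_{\mathbb{R}^{n}} \frac{(u^{-}(x)-u^{-}(y))^{2}}{|x-y|^{n+2s}}\,dx\,dy - \int_{\mathbb{R}^{n}} a(x)(u^{-}(x))^{2}\,dx.
$$
Since $a \geq 0$, both terms on the right are nonpositive, so each must vanish. Vanishing of the Gagliardo seminorm forces $u^{-}$ to be a.e. constant; since $u^{-} \in L^{2}(\mathbb{R}^{n})$, this constant must be zero. Hence $u^{-} = 0$ a.e., i.e., $u \geq 0$ a.e. I do not anticipate a serious obstacle; the only delicate point is the sign of the cross term $(u^{+}(x)-u^{+}(y))(u^{-}(x)-u^{-}(y))$, but the case analysis above handles this cleanly.
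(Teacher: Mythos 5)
Your proof is correct and follows essentially the same route as the paper's: testing the inequality with $v=u^{-}$ and analyzing the sign of the kernel $(u(x)-u(y))(u^{-}(x)-u^{-}(y))$. Your algebraic decomposition isolating the term $-(u^{-}(x)-u^{-}(y))^{2}$ even makes the final step (vanishing of the Gagliardo seminorm of $u^{-}$, hence $u^{-}$ constant and therefore zero in $L^{2}$) slightly more transparent than the paper's version.
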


\begin{proof}
Define $v=u^{-} = \max\{0,-u\}$. By hypothesis
$$
\int_{\mathbb{R}^{n}}\int_{\mathbb{R}^{n}}\frac{(u(x)-u(y))\left(u^{-}(x)-u^{-}(y)\right)}{|x-y|^{n+2s}}dxdy+ \int_{\mathbb{R}^{n}}a(x)u(x)u^{-}(x)dx\geq 0.
$$
But,
\begin{itemize}
\item if $u(x)>0$ and $u(y)>0$ then $(u(x)-u(y))\left(u^{-}(x)-u^{-}(y)\right) =0$.

\item if $u(x)<0$ and $u(y)<0$ then $(u(x)-u(y))\left(u^{-}(x)-u^{-}(y)\right) = - (u(x)-u(y))^{2} \leq 0$.

\item if $u(x)>0$ and $u(y)<0$ then $(u(x)-u(y))\left(u^{-}(x)-u^{-}(y)\right) = (u(x)-u(y))u(y) \leq 0$.

\item if $u(x)<0$ and $u(y)>0$ then $(u(x)-u(y))\left(u^{-}(x)-u^{-}(y)\right) = (u(x)-u(y))(-u(x)) \leq 0$. 

\item if $u(x) < 0$, then $a(x)u(x)u^{-}(x) = -a(x)u^{2}(x)<0$, and $a(x)u(x)u^{-}(x)$ $= 0$ in the case $u(x)\geq0$.
\end{itemize}
We conclude that each one of the integrals above is equal to zero. Therewith

$$
\frac{(u(x)-u(y))\left(u^{-}(x)-u^{-}(y)\right)}{|x-y|^{n+2s}}= 0.
$$
Therefore $u^{-}$ is constant in $H^{s}(\mathbb{R}^{n})$, that is, $u^{-}=0$.
\end{proof}
\begin{lemma}\label{lm42}
Suppose that $\epsilon \in \left(\left.0,1\right]\right.$ and $a,b \in \mathbb{R}^{n}$. Then
$$
|a|^{2}\leq |b|^{2}+2\epsilon|b|^{2}+\frac{1+\epsilon}{\epsilon}|a-b|^{2}
$$
\end{lemma}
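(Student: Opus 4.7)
The plan is to reduce this to the standard polarization identity plus Young's inequality with a parameter. First I would write $a = b + (a-b)$ and expand to obtain
\[
|a|^{2} = |b|^{2} + 2\,b\cdot(a-b) + |a-b|^{2}.
\]
The sole task is then to absorb the cross term $2\,b\cdot(a-b)$ into the other two, which is exactly the situation Young's inequality is designed for.

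Next, by Cauchy--Schwarz I would estimate $2\,b\cdot(a-b) \le 2|b|\,|a-b|$, and then apply Young's inequality in the form $2xy \le \lambda x^{2} + \lambda^{-1}y^{2}$ with the choice $\lambda = \epsilon$, $x = |b|$, $y = |a-b|$. This yields
\[
2|b|\,|a-b| \le \epsilon|b|^{2} + \tfrac{1}{\epsilon}|a-b|^{2}.
\]
Substituting back gives
\[
|a|^{2} \le |b|^{2} + \epsilon|b|^{2} + \tfrac{1}{\epsilon}|a-b|^{2} + |a-b|^{2} = |b|^{2} + \epsilon|b|^{2} + \tfrac{1+\epsilon}{\epsilon}|a-b|^{2}.
\]
Since $\epsilon > 0$ we trivially have $\epsilon \le 2\epsilon$, so the right-hand side is bounded by the expression in the statement. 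The restriction $\epsilon \le 1$ is not actually needed for the estimate itself, but it is harmless and presumably matches how the lemma will be invoked later.

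There is no real obstacle here; the proof is a two-line elementary computation. The only ``choice'' is the Young parameter, and taking $\lambda = \epsilon$ is what makes the second coefficient come out to $\tfrac{1+\epsilon}{\epsilon}$ precisely. One could alternatively take $\lambda = 2\epsilon$ and obtain the coefficient $1 + \tfrac{1}{2\epsilon} \le \tfrac{1+\epsilon}{\epsilon}$, which also suffices; either route works.
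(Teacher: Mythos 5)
Your proof is correct and follows essentially the same route as the paper: expand (or use the triangle inequality and square), then absorb the cross term via Young's/Cauchy's inequality with parameter $\epsilon$. Your version is in fact marginally sharper, yielding the coefficient $\epsilon$ rather than $2\epsilon$ on the $|b|^{2}$ term before the final trivial enlargement.
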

\begin{proof}
$$
\begin{array}{ll}
|a|^{2} & \leq \left(|b|+|a-b|\right)^{2} \\
& = |b|^{2}+2|b||a-b|+|a-b|^{2} \\
\end{array}
$$
By Cauchy inequality with $\epsilon$ 
$$
|b||a-b|\leq \epsilon|b|^{2}+\frac{|a-b|^{2}}{4\epsilon} \leq 
\epsilon|b|^{2}+\frac{|a-b|^{2}}{2\epsilon} 
$$
Replacing in the inequality above
$$
\begin{array}{ll}
|a|^{2} & \leq |b|^{2}+2\epsilon|b|^{2}+\frac{|a-b|^{2}}{\epsilon}+|a-b|^{2} \\
& = |b|^{2}+2\epsilon|b|^{2}+\frac{1+\epsilon}{\epsilon}|a-b|^{2}.
\end{array} 
$$
\end{proof}
\begin{lemma}\label{lm43}
With the same assumptions of Lemma \ref{lm41} and $a \in L^{1}_{loc}(\mathbb{R}^{3})$, we have for all $r,d>0$ and $x_{0} \in \mathbb{R}^{n}$  
\begin{equation}
\int_{B_{r}}\int_{B_{r}}\left|\log\left(\frac{d+u(x)}{d+u(y)}\right)\right|^{2}\frac{1}{|x-y|^{n+2s}}dxdy 
\leq Cr^{n-2s} + \int_{B_{2r}}a(x)dx, 
\end{equation}
where $B_{r}=B_{r}(x_{0})$ and $C=C(n,s)>0$ is a constant.
\end{lemma}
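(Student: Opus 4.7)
The strategy is to adapt the test-function technique of Di Castro--Kuusi--Palatucci (Lemma 1.3 of \cite{dkp}) to the present linear supersolution framework. As a preliminary point, Lemma \ref{lm41} gives $u \geq 0$ almost everywhere, so $U(x) := d + u(x) \geq d > 0$ and the logarithm in the conclusion is well-defined.

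First I would fix a standard radial cut-off $\eta \in C_{0}^{\infty}(B_{2r}(x_{0}))$ with $\eta \equiv 1$ on $B_{r}(x_{0})$, $0 \leq \eta \leq 1$ and $|\nabla \eta| \leq C/r$, and use $v = \eta^{2}/U$ as a test function in the supersolution inequality. Admissibility holds because $v \in H^{s}(\mathbb{R}^{n}) \cap L^{\infty}(\mathbb{R}^{n})$: boundedness follows from $v \leq 1/d$, and the Gagliardo seminorm is controlled by splitting $v(x)-v(y)$ into a piece proportional to $u(x)-u(y)$ and one proportional to $\eta^{2}(x)-\eta^{2}(y)$, both of which have finite contribution. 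Since $0 \leq uv = \eta^{2}u/U \leq \eta^{2}$, the zero-order term is controlled by
\[
\int_{\mathbb{R}^{n}} a(x) u(x) v(x)\,dx \leq \int_{B_{2r}} a(x)\,dx.
\]

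The heart of the argument is a pointwise inequality of the form
\[
(u(x)-u(y))(v(x)-v(y)) \leq -c\,\eta(x)\eta(y)\log^{2}\!\left(\frac{U(x)}{U(y)}\right) + C(\eta(x)-\eta(y))^{2},
\]
valid for all $x,y$, with constants depending only on $n$ and $s$. To prove it, I would start from the decomposition
\[
(u(x)-u(y))(v(x)-v(y)) = -\eta^{2}(x)\frac{(U(x)-U(y))^{2}}{U(x)U(y)} + \frac{(U(x)-U(y))(\eta^{2}(x)-\eta^{2}(y))}{U(y)},
\]
symmetrize in $x \leftrightarrow y$, and then use two elementary facts: the identity $(U(x)-U(y))^{2}/(U(x)U(y)) = 4\sinh^{2}(\tfrac{1}{2}\log(U(x)/U(y))) \geq \log^{2}(U(x)/U(y))$ turns the main negative piece into the desired log-quantity, and Lemma \ref{lm42}, applied with a sufficiently small parameter $\epsilon$ after writing $\eta^{2}(x)-\eta^{2}(y) = (\eta(x)-\eta(y))(\eta(x)+\eta(y))$, absorbs the cross term back into the main piece and leaves only an error proportional to $(\eta(x)-\eta(y))^{2}$.

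To finish, I would divide the pointwise inequality by $|x-y|^{n+2s}$ and integrate. The error term is controlled by the scaling estimate
\[
\iint_{\mathbb{R}^{n}\times\mathbb{R}^{n}} \frac{(\eta(x)-\eta(y))^{2}}{|x-y|^{n+2s}}\,dx\,dy \leq C r^{n-2s},
\]
which follows from $|\nabla \eta| \leq C/r$ and $\operatorname{supp} \eta \subset B_{2r}$. Combining with the supersolution inequality and the bound on the zero-order term, then restricting the resulting integral to $B_{r}\times B_{r}$ where $\eta \equiv 1$, yields exactly the claimed estimate. The main obstacle is the pointwise algebraic inequality: the factor $(U(x)-U(y))^{2}/(U(x)U(y))$ and the cross quantity $(U(x)-U(y))/U(y)$ behave very differently when the ratio $U(x)/U(y)$ is far from $1$, and the $\epsilon$-refinement supplied by Lemma \ref{lm42} (rather than a naive Cauchy--Schwarz) is exactly what keeps the absorbed error constant independent of that ratio.
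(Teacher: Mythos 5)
Your overall architecture coincides with the paper's (and with Di Castro--Kuusi--Palatucci): test with $\eta^{2}/(d+u)$, bound the zero-order term by $\int_{B_{2r}}a\,dx$, control the cutoff error by $Cr^{n-2s}$, and reduce everything to a pointwise inequality producing the $\log^{2}$ term. The use of Lemma \ref{lm41} to get $u\geq 0$, the admissibility of the test function, the treatment of the exterior region (where $\tfrac{U(x)-U(y)}{U(x)}\leq 1$), and the identity $(U(x)-U(y))^{2}/(U(x)U(y))=4\sinh^{2}\bigl(\tfrac12\log(U(x)/U(y))\bigr)\geq \log^{2}(U(x)/U(y))$ are all correct; the last of these is in fact cleaner than the paper's two-case analysis via $(\log t)^{2}\leq t-1$ for $t\geq 2$ and $\log(1+t)\leq t$.

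The gap is in the absorption of the cross term, which is the technical heart of the lemma and which you only assert. After your symmetrization the cross term is, with $t=U(x)/U(y)$, equal to $\tfrac12(t-t^{-1})(\eta(x)-\eta(y))(\eta(x)+\eta(y))$, while the good negative term carries the factor $t+t^{-1}-2$. Applying Lemma \ref{lm42} (or Young) with a \emph{fixed} ``sufficiently small'' $\epsilon$ leaves an error whose coefficient is $\tfrac{(t-t^{-1})^{2}}{t+t^{-1}-2}=t+2+t^{-1}$, unbounded in $t$; this is not $C(\eta(x)-\eta(y))^{2}$, and the excess cannot be pushed back into the negative term, since it is of the same order in $t$ but enters with a coefficient exceeding the available negative margin. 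The actual fix in the paper (following \cite{dkp}) is not a smaller constant but a \emph{point-dependent} choice $\epsilon=\delta\,\tfrac{u(x)-u(y)}{u(x)+d}$ in Lemma \ref{lm42}, applied in the unsymmetrized expression with $a=\phi(x)$, $b=\phi(y)$ in the case $u(x)>u(y)$: the error $\tfrac{1+\epsilon}{\epsilon}|\phi(x)-\phi(y)|^{2}$ is then multiplied by the prefactor $\tfrac{u(x)-u(y)}{u(x)+d}=\epsilon/\delta$, and the two factors cancel, leaving $(\delta^{-1}+O(1))|\phi(x)-\phi(y)|^{2}$. Your closing sentence shows you are aware that a naive Cauchy--Schwarz fails, but the proof as written never specifies this choice of $\epsilon$ nor exhibits the cancellation it produces, so the claimed pointwise inequality --- and with it the lemma --- is not actually established.
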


\begin{proof}
Consider $\phi \in C_{0}^{\infty}(B_{\frac{3r}{2}})$, $0\leq \phi \leq 1$, $\phi = 1$ in $B_{r}$ and $K>0$ such that $||D\phi||_{\infty} \leq Kr^{-1}$. The function
$$
\eta=\frac{\phi^{2}}{u+d}
$$
is in $H^{s}(\mathbb{R}^{n})$ and $\eta\geq0$ (see Lemma 5.3 in \cite{dpv}). By hypothesis
$$
\begin{array}{ll}
0&\leq \int_{R^{n}}\int_{R^{n}}\frac{(u(x)-u(y))(\eta(x)-\eta(y))}{|x-y|^{n+2s}}dxdy + \int_{\mathbb{R}^{n}}a(x)u(x)\eta(x)dx \\
&= \int_{B_{2r}}\int_{B_{2r}}\frac{(u(x)-u(y))(\eta(x)-\eta(y))}{|x-y|^{n+2s}}dxdy \\
& + \int_{R^{n}-B_{2r}}\int_{B_{2r}}\frac{(u(x)-u(y))(\eta(x)-\eta(y))}{|x-y|^{n+2s}}dxdy \\
& +
\int_{B_{2r}}\int_{R^{n}-B_{2r}}\frac{(u(x)-u(y))(\eta(x)-\eta(y))}{|x-y|^{n+2s}}dxdy \\
& +
\int_{R^{n}-B_{2r}}\int_{R^{n}-B_{2r}}\frac{(u(x)-u(y))(\eta(x)-\eta(y))}{|x-y|^{n+2s}}dxdy \\

& + \int_{\mathbb{R}^{n}}a(x)u(x)\eta(x)dx.
\end{array}
$$
We will prove some statements about the five integrals  of the last inequality.

\begin{itemize}
 \item $A.1)$ There are constants $C_{2},C_{3}>0$, such that, they depend only on $n$ and $s$ and
$$
\begin{array}{ll}
&\int_{B_{2r}}\int_{B_{2r}}\frac{(u(x)-u(y))(\eta(x)-\eta(y))}{|x-y|^{n+2s}}dxdy \\
& \leq -C_{2}\int_{B_{2r}}\int_{B_{2r}}\left|\log\left(\frac{d+u(x)}{d+u(y)}\right)\right|^{2}\frac{1}{|x-y|^{n+2s}}\min\left\{\phi(y)^{2}, \phi(x)^{2}\right\}dxdy \\
&+C_{3}\int_{B_{2r}}\int_{B_{2r}}\frac{|\phi(x)-\phi(y)|^{2}}{|x-y|^{n+2s}}dxdy,
\end{array}
$$
where $\min\left\{a,b\right\} = a$ if $a \leq b$ and $\min\left\{a,b\right\} = b$ if $a \geq b$, for all $a,b \in \mathbb{R}$.
\end{itemize}

Fix $x,y \in B_{2r}$ and suppose that $u(x)>u(y)$. Define
$$
\epsilon = \delta\frac{u(x)-u(y)}{u(x)+d}
$$
where $\delta \in (0,1)$ is chosen small enough such that $\epsilon \in (0,1)$. Taking $a= \phi(x)$ and $b=\phi(y)$ in the Lemma \ref{lm42}, we get
$$
|\phi(x)|^{2}\leq |\phi(y)|^{2}+2 \delta\frac{u(x)-u(y)}{u(x)+d}|\phi(y)|^{2}+\left(\delta^{-1}\frac{u(x)+d}{u(x)-u(y)}+1\right)|\phi(x)-\phi(y)|^{2}
$$
Replacing 
$$
\begin{array}{ll}
&\frac{(u(x)-u(y))(\eta(x)-\eta(y))}{|x-y|^{n+2s}}\\
&=(u(x)-u(y))\left(\frac{\phi^{2}(x)}{u(x)+d}-\frac{\phi^{2}(y)}{u(y)+d}\right)\frac{1}{|x-y|^{n+2s}}\\
&  
\begin{array}{ll}
&\leq (u(x)-u(y))\left(\frac{ |\phi(y)|^{2}+2 \delta\frac{u(x)-u(y)}{u(x)+d}|\phi(y)|^{2}+\left(\delta^{-1}\frac{u(x)+d}{u(x)-u(y)}+1\right)|\phi(x)-\phi(y)|^{2}}{u(x)+d}\right.\\&
\left.-\frac{\phi^{2}(y)}{u(y)+d}\right)\frac{1}{|x-y|^{n+2s}} 
\end{array}
\\ &  
\begin{array}{ll}
&=(u(x)-u(y))\frac{|\phi(y)|^{2}}{u(x)+d}\left[ 1+ 2\delta\frac{u(x)-u(y)}{u(x)+d}+\left(\delta^{-1}\frac{u(x)+d}{u(x)-u(y)}+1\right)\frac{|\phi(x)-\phi(y)|^{2}}{|\phi(y)|^{2}}\right.\\
&\left.-\frac{u(x)+d}{u(y)+d}\right]\frac{1}{|x-y|^{n+2s}}
\end{array}
 \\
& 
\begin{array}{ll} &=(u(x)-u(y))\frac{|\phi(y)|^{2}}{u(x)+d}\frac{1}{|x-y|^{n+2s}}\left( 1+ 2\delta\frac{u(x)-u(y)}{u(x)+d}-\frac{u(x)+d}{u(y)+d}\right)\\
& + \left(\delta^{-1} + \frac{(u(x)-u(y))}{u(x)+d}\right)|\phi(x)-\phi(y)|^{2}\frac{1}{|x-y|^{n+2s}}
\end{array}
\\
%& \leq (u(x)-u(y))|\frac{\phi(y)|^{2}}{u(x)+d}\frac{1}{|x-y|^{n+2s}}\left( 1+ \delta\frac{u(x)-u(y)}{u(x)+d}-\frac{u(x)+d}{u(y)+d}\right) + \left(\delta^{-1}|\phi(y)|^{2}\right. \\
%& + \left.\epsilon \delta^{-1}|\phi(y)|^{2}\right)|\phi(x)-\phi(y)|^{2}\frac{1}{|x-y|^{n+2s}}\\
&
\begin{array}{ll}
 &\leq (u(x)-u(y))\frac{|\phi(y)|^{2}}{u(x)+d}\frac{1}{|x-y|^{n+2s}}\left( 1+ 2\delta\frac{u(x)-u(y)}{u(x)+d}-\frac{u(x)+d}{u(y)+d}\right) \\
 &+ 2\delta^{-1}|\phi(x)-\phi(y)|^{2}\frac{1}{|x-y|^{n+2s}}.
 \end{array}
 \\
\end{array}
$$
We will rewrite the first part of the sum appearing on the right side of the last inequality 
$$
\begin{array}{ll}
&(u(x)-u(y))\frac{|\phi(y)|^{2}}{u(x)+d}\frac{1}{|x-y|^{n+2s}}\left( 1+ 2\delta\frac{u(x)-u(y)}{u(x)+d}-\frac{u(x)+d}{u(y)+d}\right)\\
& = \left(\frac{u(x)-u(y)}{u(x)+d}\right)^{2}\phi(y)^{2}\frac{1}{|x-y|^{n+2s}}\left[ \frac{u(x)+d}{u(x)-u(y)}+ 2\delta-\frac{u(x)+d}{u(y)+d}\cdot\frac{u(x)+d}{u(x)-u(y)} \right]\\
& = \left(\frac{u(x)-u(y)}{u(x)+d}\right)^{2}\phi(y)^{2}\frac{1}{|x-y|^{n+2s}}\left[\frac{1-\frac{u(x)+d}{u(y)+d}}{1-\frac{u(y)+d}{u(x)+d}} + 2\delta \right].\\
\end{array}
$$
Define the function $g:(0,1) \rightarrow \mathbb{R}$ by
$$
g(t)= \frac{1-t^{-1}}{1-t}.
$$
It satisfies $g(t) \leq -\frac{1}{4}\frac{t^{-1}}{1-t}$ if $t \in \left(\left.0,\frac{1}{2}\right]\right.$ and $g(t)\leq -1$ for all $t \in (0,1)$. We have two cases. If $\frac{u(y)+d}{u(x)+d} \leq \frac{1}{2}$ then, we conclude that
$$
\begin{array}{ll}
&\left(\frac{u(x)-u(y)}{u(x)+d}\right)^{2}\phi(y)^{2}\frac{1}{|x-y|^{n+2s}}\left[\frac{1-\frac{u(x)+d}{u(y)+d}}{1-\frac{u(y)+d}{u(x)+d}} + 2\delta \right]\\
& \leq \left(\frac{u(x)-u(y)}{u(x)+d}\right)^{2}\phi(y)^{2}\frac{1}{|x-y|^{n+2s}}\left[-\frac{1}{4}\frac{\frac{u(x)+d}{u(y)+d}}{\frac{u(x)-u(y)}{u(x)+d}} + 2\delta \right]\\
& = \frac{u(x)-u(y)}{u(x)+d}\phi(y)^{2}\frac{1}{|x-y|^{n+2s}}\left[-\frac{1}{4}\frac{u(x)+d}{u(y)+d} + 2\delta\frac{u(x)-u(y)}{u(x)+d} \right] \\
& = \frac{u(x)-u(y)}{u(y)+d}\phi(y)^{2}\frac{1}{|x-y|^{n+2s}}\left[-\frac{1}{4} + 2\delta\frac{(u(x)-u(y))(u(y)+d)}{(u(x)+d)^{2}} \right] \\
&\leq \frac{u(x)-u(y)}{u(y)+d}\phi(y)^{2}\frac{1}{|x-y|^{n+2s}}\left[-\frac{1}{4} + 2\delta \right].
\end{array}
$$
In the last inequality, we use that 
$$
\frac{(u(x)-u(y))(u(y)+d)}{(u(x)+d)^{2}} \leq 1.
$$
Choosing $\delta=\frac{1}{16}$
we have
$$
\begin{array}{ll}
&\left(\frac{u(x)-u(y)}{u(x)+d}\right)^{2}\phi(y)^{2}\frac{1}{|x-y|^{n+2s}}\left[\frac{1-\frac{u(x)+d}{u(y)+d}}{1-\frac{u(y)+d}{u(x)+d}} + 2\delta \right]\\
& \leq -\frac{1}{8}\frac{u(x)-u(y)}{u(y)+d}\phi(y)^{2}\frac{1}{|x-y|^{n+2s}}\\
& \leq -\frac{1}{8}\left[\log\left(\frac{u(x)+d}{u(y)+d}\right)\right]^{2}\phi(y)^{2}\frac{1}{|x-y|^{n+2s}}.
\end{array}
$$
Above, we have used that
$
(\log(t))^{2}\leq t-1
$
for all $t\geq2$, and that $\frac{u(x)+d}{u(y)+d}\geq 2$.
But, if $ \frac{u(y)+d}{u(x)+d} > \frac{1}{2}$, then using that $g(t) \leq -1$ and that $\delta= \frac{1}{16}$ 
$$
\begin{array}{ll}
&\left(\frac{u(x)-u(y)}{u(x)+d}\right)^{2}\phi(y)^{2}\frac{1}{|x-y|^{n+2s}}\left[\frac{1-\frac{u(x)+d}{u(y)+d}}{1-\frac{u(y)+d}{u(x)+d}} + 2\delta \right]\\ &\leq \left(\frac{u(x)-u(y)}{u(x)+d}\right)^{2}\phi(y)^{2}\frac{1}{|x-y|^{n+2s}}\left[-1 + 2\delta \right]\\
& \leq-\frac{7}{8}\left(\frac{u(x)-u(y)}{u(x)+d}\right)^{2}\phi(y)^{2}\frac{1}{|x-y|^{n+2s}} \\

&\leq-\frac{7}{32}\left[\log\left(\frac{u(x)+d}{u(y)+d}\right)\right]^{2}\phi(y)^{2}\frac{1}{|x-y|^{n+2s}}.
\end{array}
$$
Here, we have used that
$$
\begin{array}{ll}
\left[\log\left(\frac{u(x)+d}{u(y)+d}\right)\right]^{2} & = \left[\log\left(1+ \frac{u(x)-u(y)}{u(y)+d}\right)\right]^{2} \\
& \leq 4\left(\frac{u(x)-u(y)}{u(x)+d}\right)^{2}.
\end{array}
$$
This is a consequence of
$$
\log(1+t)\leq t
$$
for all $t>0$, and that
$$
t=\frac{u(x)-u(y)}{u(y)+d}=\frac{u(x)-u(y)}{u(x)+d}\cdot \frac{u(x)+d}{u(y)+d}\leq 2\frac{u(x)-u(y)}{u(x)+d}.
$$
In short
$$
\begin{array}{ll}
 &(u(x)-u(y))\frac{|\phi(y)|^{2}}{u(x)+d}\frac{1}{|x-y|^{n+2s}}\left( 1+ 2\delta\frac{u(x)-u(y)}{u(x)+d}-\frac{u(x)+d}{u(y)+d}\right)\\
 &\leq -\frac{1}{8}\left[\log\left(\frac{u(x)+d}{u(y)+d}\right)\right]^{2}\phi(y)^{2}\frac{1}{|x-y|^{n+2s}}.
 \end{array}
$$
We have proved that, if $u(x)>u(y)$ then
$$
\begin{array}{ll}
&\frac{(u(x)-u(y))(\eta(x)-\eta(y))}{|x-y|^{n+2s}}\\
&\leq -\frac{1}{8}\left[\log\left(\frac{u(x)+d}{u(y)+d}\right)\right]^{2}\phi(y)^{2}\frac{1}{|x-y|^{n+2s}} + 32|\phi(x)-\phi(y)|^{2}\frac{1}{|x-y|^{n+2s}}.
\end{array}
$$
Integrating in $B_{2r}$ the last inequality, we get
$$
\begin{array}{ll}
&\int_{B_{2r}}\int_{B_{2r}}\frac{(u(x)-u(y))(\eta(x)-\eta(y))}{|x-y|^{n+2s}}dxdy\\
&=\int_{B_{2r}}\int_{\left\{x;u(x)>u(y)\right\}}\frac{(u(x)-u(y))(\eta(x)-\eta(y))}{|x-y|^{n+2s}}dxdy \\
&+\int_{B_{2r}}\int_{\left\{x;u(x)<u(y)\right\}}\frac{(u(x)-u(y))(\eta(x)-\eta(y))}{|x-y|^{n+2s}}dxdy \\
& \leq -\frac{1}{8}\int_{B_{2r}}\int_{\left\{x;u(x)>u(y)\right\}}\left[\log\left(\frac{u(x)+d}{u(y)+d}\right)\right]^{2}\phi(y)^{2}\frac{1}{|x-y|^{n+2s}}dxdy \\

& -\frac{1}{8}\int_{B_{2r}}\int_{\left\{x;u(x)<u(y)\right\}}\left[\log\left(\frac{u(y)+d}{u(x)+d}\right)\right]^{2}\phi(x)^{2}\frac{1}{|x-y|^{n+2s}}dxdy \\
& + 32\int_{B_{2r}}\int_{B_{2r}}|\phi(x)-\phi(y)|^{2}\frac{1}{|x-y|^{n+2s}}dxdy. \\
\end{array}
$$
But, using that $\left|\log(x)\right| = \left|\log \left(\frac{1}{x}\right)\right|$ for all $x\neq 0$, we obtain that
$$
\left[\log\left(\frac{u(y)+d}{u(x)+d}\right)\right]^{2} = \left[\log\left(\frac{u(x)+d}{u(y)+d}\right)\right]^{2}.
$$
Replacing
$$
\begin{array}{ll}
&\int_{B_{2r}}\int_{B_{2r}}\frac{(u(x)-u(y))(\eta(x)-\eta(y))}{|x-y|^{n+2s}}dxdy\\
& \leq -\frac{1}{8}\int_{B_{2r}}\int_{\left\{x; u(x)>u(y)\right\}}\left[\log\left(\frac{u(x)+d}{u(y)+d}\right)\right]^{2}\phi(y)^{2}\frac{1}{|x-y|^{n+2s}}dxdy \\
& - \frac{1}{8}\int_{B_{2r}}\int_{\left\{x;u(x)<u(y)\right\}}\left[\log\left(\frac{u(x)+d}{u(y)+d}\right)\right]^{2}\phi(x)^{2}\frac{1}{|x-y|^{n+2s}}dxdy\\
& + 32\int_{B_{2r}}\int_{B_{2r}}|\phi(x)-\phi(y)|^{2}\frac{1}{|x-y|^{n+2s}}dxdy \\
& \leq -\frac{1}{8}\int_{B_{2r}}\int_{\left\{x; u(x)>u(y)\right\}}\left[\log\left(\frac{u(x)+d}{u(y)+d}\right)\right]^{2}\min{\left\{\phi(y)^{2},\phi(x)^{2}\right\}}\frac{1}{|x-y|^{n+2s}}dxdy \\
& - \frac{1}{8}\int_{B_{2r}}\int_{\left\{x;u(x)<u(y)\right\}}\left[\log\left(\frac{u(x)+d}{u(y)+d}\right)\right]^{2}\min{\left\{\phi(y)^{2},\phi(x)^{2}\right\}}\frac{1}{|x-y|^{n+2s}}dxdy\\
& + 32\int_{B_{2r}}\int_{B_{2r}}|\phi(x)-\phi(y)|^{2}\frac{1}{|x-y|^{n+2s}}dxdy \\
& =  -\frac{1}{8}\int_{B_{2r}}\int_{B_{2r}}\left[\log\left(\frac{u(x)+d}{u(y)+d}\right)\right]^{2}\min{\left\{\phi(y)^{2},\phi(x)^{2}\right\}}\frac{1}{|x-y|^{n+2s}}dxdy\\
&+32\int_{B_{2r}}\int_{B_{2r}}|\phi(x)-\phi(y)|^{2}\frac{1}{|x-y|^{n+2s}}dxdy,
\end{array}
$$
Thus, we have proved the claim 1.
\begin{itemize}

\item $A.2)$ There is $C_{3}>0$ such that, it depends only on $s$ and $n$ and
$$
\int_{R^{n}-B_{2r}}\int_{B_{2r}}\frac{(u(x)-u(y))(\eta(x)-\eta(y))}{|x-y|^{n+2s}}dxdy \leq C_{3}r^{n-2s}.
$$
\end{itemize}
Indeed,

$$
\begin{array}{ll}
&\int_{R^{n}-B_{2r}}\int_{B_{2r}}\frac{(u(x)-u(y))(\eta(x)-\eta(y))}{|x-y|^{n+2s}}dxdy\\
&=\int_{\mathbb{R}^{n}-B_{2r}}\int_{\mathbb{R}^{n}}(u(x)-u(y))\left(\frac{\phi^{2}(x)}{u(x)+d}-\frac{\phi^{2}(y)}{u(y)+d}\right)\frac{1}{|x-y|^{n+2s}}dxdy \\
& =\int_{\mathbb{R}^{n}-B_{2r}}\int_{\mathbb{R}^{n}}|\phi(x)|^{2}\frac{u(x)-u(y)}{u(x)+d}\frac{1}{|x-y|^{n+2s}}dxdy \\
& \leq \int_{\mathbb{R}^{n}-B_{2r}}\int_{\mathbb{R}^{n}}|\phi(x)|^{2}\frac{1}{|x-y|^{n+2s}}dxdy
\end{array}
$$
In the last equality, we have used that $u(y)\geq 0$ and therefore
$$
\frac{u(x)-u(y)}{u(x)+d} \leq 1.
$$
A simple calculation shows that
$$
\begin{array}{ll}
&\int_{\mathbb{R}^{n}-B_{2r}}\int_{\mathbb{R}^{n}}|\phi(x)|^{2}\frac{1}{|x-y|^{n+2s}}dxdy 
\leq C_{3}r^{n-2s}\\
\end{array}
$$
and $C_{3}$ depends only on $n$ and $s$. Therefore we get the assertion 2.

\begin{itemize}
\item $A.3)$ We claim that $$\int_{\mathbb{R}^{n}}a(x)u(x)\eta(x)dx \leq \int_{B_{2r}}a(x)dx$$.
\end{itemize}
Indeed,
$$
\begin{array}{ll}
\int_{\mathbb{R}^{n}}a(x)u(x)\eta(x)dx& = \int_{\mathbb{R}^{n}}a(x)u(x)\frac{\phi^{2}(x)}{u(x)+d}dx \\
& =\int_{B_{2r}}a(x)u(x)\frac{\phi^{2}(x)}{u(x)+d}dx \\
& =\int_{B_{2r}}a(x)\frac{u(x)}{u(x)+d} \phi^{2}(x)dx\\
& \leq \int_{B_{2r}}a(x)dx
\end{array}
$$
We have used that $supp(\eta)\subset B_{2r}$, that $\phi(x)\in (0,1)$ and that $\frac{u(x)}{u(x)+d} \leq 1$.

The statements 1,2 and 3 imply that
$$
\begin{array}{ll}
 &\int_{B_{2r}}\int_{B_{2r}}\left[\log\left(\frac{u(x)+d}{u(y)+d}\right)\right]^{2}\min\left\{\phi(y)^{2}, \phi(x)^{2}\right\}\frac{1}{|x-y|^{n+2s}}dxdy \\
&\leq C_{5}\int_{B_{2r}}\int_{B_{2r}}\frac{|\phi(x)-\phi(y)|^{2}}{|x-y|^{n+2s}}dxdy +C_{6}r^{n-2s}+\int_{B_{2r}}a(x)dx.
\end{array}
$$
for constants $C_{5},C_{6}$. The constants $C_{5},C_{6}$ depend only on $n$ and $s$. But $\phi=1$ in $B_{r}$ implies that
\begin{equation}\label{eq1}
\begin{array}{ll}
&\int_{B_{r}}\int_{B_{r}}\left|\log\left(\frac{d+u(x)}{d+u(y)}\right)\right|^{2}\frac{1}{|x-y|^{n+2s}}dxdy \\
&\leq C_{5}\int_{B_{2r}}\int_{B_{2r}}\frac{|\phi(x)-\phi(y)|^{2}}{|x-y|^{n+2s}}dxdy +C_{6}r^{n-2s}+\int_{B_{2r}}a(x)dx
\end{array}
\end{equation}
Finally, we will show that 
$$
\int_{B_{2r}}\int_{B_{2r}}\frac{|\phi(x)-\phi(y)|^{2}}{|x-y|^{n+2s}}dxdy \leq C_{7}r^{n-2s}
$$
By hypothesis
$$
\begin{array}{ll}
\int_{B_{2r}}\int_{B_{2r}}\frac{|\phi(x)-\phi(y)|^{2}}{|x-y|^{n+2s}}dxdy &\leq Kr^{-2}\int_{B_{2r}}\int_{B_{2r}}\frac{|x-y|^{2}}{|x-y|^{n+2s}}dxdy \\
&=Kr^{-2}\int_{B_{2r}}\int_{B_{2r}}\frac{1}{|x-y|^{n+2(s-1)}}dxdy \\
&\leq Kr^{-2}\frac{r^{2(1-s)}}{2(1-s)}|B_{2r}|=C_{7}r^{n-2s}
\end{array}
$$
where $C_{7}$ depends only on $n$ and $s$. Replacing this last estimate in (\ref{eq1}) we obtain the Lemma \ref{lm43}.
\end{proof}
Following the same ideas of Theorem A.1 in \cite{bf2}, we will prove the theorem stated at the beginning of the section.
\begin{theorem}\label{th44}
Suppose that $u \in H^{s}(\mathbb{R}^{n})$ and $a\geq 0$ with $a \in L^{1}_{loc}(\mathbb{R}^{n})$. We will assume that
$$
\int_{\mathbb{R}^{n}}\int_{\mathbb{R}^{n}}\frac{(u(x)-u(y))\left(v(x)-v(y)\right)}{|x-y|^{n+2s}}dxdy+ \int_{\mathbb{R}^{n}}a(x)u(x)v(x)dx\geq 0,
$$
for all $v \in H^{s}(\mathbb{R}^{n})$ with $v \geq 0$ almost everywere.
Then $u > 0$ almost everywere in $\mathbb{R}^{n}$ or $u=0$ almost everywere in $\mathbb{R}^{n}$.
\end{theorem}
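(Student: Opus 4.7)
The plan is to combine the weak minimum principle of Lemma \ref{lm41} with the logarithmic estimate of Lemma \ref{lm43}, using a pointwise blow-up of the auxiliary parameter $d$ to extract a contradiction via Fatou's lemma whenever $\{u=0\}$ is nontrivially present in some ball.

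First I would apply Lemma \ref{lm41} to conclude $u\geq 0$ almost everywhere. It then suffices to rule out the coexistence, in positive Lebesgue measure, of the sets $Z:=\{u=0\}$ and $P:=\{u>0\}$. To locate a useful ball, I would introduce the open sets $\mathcal{A}=\{x\in\mathbb{R}^{n}:|Z\cap B_{\rho}(x)|=0 \text{ for some } \rho>0\}$ and $\mathcal{B}=\{x\in\mathbb{R}^{n}:|P\cap B_{\rho}(x)|=0 \text{ for some } \rho>0\}$. If no ball contained both $Z$ and $P$ with positive measure, then $\mathcal{A}\cup\mathcal{B}=\mathbb{R}^{n}$; and since $Z\cup P$ has full measure, any point of $\mathcal{A}\cap\mathcal{B}$ would force a ball of zero Lebesgue measure, so $\mathcal{A}$ and $\mathcal{B}$ would be disjoint open sets covering the connected space $\mathbb{R}^{n}$. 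One of them must therefore be empty, and a Lindel\"of covering of the other would give $|Z|=0$ or $|P|=0$, contradicting the standing assumption. Hence there exist $x_{0}\in\mathbb{R}^{n}$ and $r>0$ with $|Z\cap B_{r}(x_{0})|>0$ and $|P\cap B_{r}(x_{0})|>0$.

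Next I would apply Lemma \ref{lm43} with this fixed ball $B_{r}=B_{r}(x_{0})$ and a sequence $d_{k}\downarrow 0^{+}$, obtaining the uniform bound
$$\int_{B_{r}}\int_{B_{r}}\left|\log\frac{d_{k}+u(x)}{d_{k}+u(y)}\right|^{2}\frac{dx\,dy}{|x-y|^{n+2s}}\leq Cr^{n-2s}+\int_{B_{2r}}a(x)\,dx=:M<\infty,$$
the right side being finite because $a\in L^{1}_{loc}(\mathbb{R}^{n})$. For each $(x,y)\in(Z\cap B_{r})\times(P\cap B_{r})$ the integrand equals $\left|\log\frac{d_{k}}{d_{k}+u(y)}\right|^{2}$, which tends to $+\infty$ as $k\to\infty$. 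Since this product set has positive product Lebesgue measure and the kernel $|x-y|^{-n-2s}$ is strictly positive on $B_{r}\times B_{r}$, Fatou's lemma applied to the nonnegative integrand yields a left-hand side equal to $+\infty$, contradicting the uniform bound $M$. Hence either $|Z|=0$ or $|P|=0$, which combined with $u\geq 0$ a.e.\ gives the claimed dichotomy.

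The main obstacle is the measure-theoretic selection of a ball in which both $Z$ and $P$ appear with positive measure; once this step is handled by the connectedness argument above, the blow-up as $d_{k}\downarrow 0$ followed by Fatou is routine. A minor point worth recording is the finiteness of $\int_{B_{2r}}a\,dx$, which is guaranteed by the hypothesis $a\in L^{1}_{loc}(\mathbb{R}^{n})$ and makes the right-hand side of Lemma \ref{lm43} independent of $k$.
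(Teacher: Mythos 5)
Your proposal is correct and follows essentially the same route as the paper: nonnegativity from Lemma \ref{lm41}, the $d$-independent bound of Lemma \ref{lm43}, a blow-up $d\downarrow 0$ with Fatou's lemma, and a connectedness argument on $\mathbb{R}^{n}$. The only (immaterial) difference is organizational: the paper first proves the dichotomy on each ball by converting the double integral into an $L^{2}(B_{r})$ bound on $\log(1+u/\delta)$ using $|Z|>0$ and then invokes connectedness, whereas you locate a ``mixed'' ball first and apply Fatou directly to the double integral; both are valid.
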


\begin{proof}
By Lemma \ref{lm41}, $u \geq 0$. Suppose that $x_{0}\in \mathbb{R}^{n}$ and $r>0$. Define
$$
Z:=\{x \in B_{r}(x_{0}); u(x)=0\}
$$
If $|Z|>0$, then we define
$$
\begin{array}{cccl}
F_{\delta}:&B_{r}(x_{0})& \longrightarrow & \mathbb{R} \\
&x& \longmapsto & \log\left(1+\frac{u(x)}{\delta}\right)
\end{array}
$$
for all $\delta>0$.
We have $F_{\delta}(y)=0$ for all $y\in Z$. Therefore, if $x \in B_{r}(x_{0})$ and $y \in Z$ 
$$
|F_{\delta}(x)|^{2} = \frac{|F_{\delta}(x)-F_{\delta}(y)|^{2}}{|x-y|^{n+2s}}|x-y|^{n+2s}
$$
Integrating with respect to $ y \in Z $ we get
$$
\begin{array}{ll}
|Z||F_{\delta}(x)|^{2} &= \int_{Z}\frac{|F_{\delta}(x)-F_{\delta}(y)|^{2}}{|x-y|^{n+2s}}|x-y|^{n+2s}dy \\
&\leq 2r^{n+2s}\int_{Z}\frac{|F_{\delta}(x)-F_{\delta}(y)|^{2}}{|x-y|^{n+2s}}dy
\end{array}
$$
Now, integrating with respect to $x \in B_{r}$ we get
$$
\begin{array}{ll}
\int_{B_{r}(x_{0})}|F_{\delta}(x)|^{2}dx & \leq \frac{1}{|Z|} 2r^{n+2s}\int_{B_{r}(x_{0})}\int_{Z}\frac{|F_{\delta}(x)-F_{\delta}(y)|^{2}}{|x-y|^{n+2s}}dydx
\\ &\leq \frac{1}{|Z|} 2r^{n+2s}\int_{B_{r}(x_{0})}\int_{B_{r}(x_{0})}\frac{|F_{\delta}(x)-F_{\delta}(y)|^{2}}{|x-y|^{n+2s}}dydx \\
& = \frac{1}{|Z|} 2r^{n+2s}\int_{B_{r}(x_{0})}\int_{B_{r}(x_{0})}\left|\log \left(\frac{\delta+u(x)}{\delta+u(y)}\right)\right|^{2}\frac{1}{|x-y|^{n+2s}}dxdy \\
& \leq \frac{1}{|Z|} 2r^{n+2s}\left(Cr^{n-2s}+\int_{B_{2r}}a(x)dx\right) \\
& =  \frac{1}{|Z|} 2Cr^{2n}+ \frac{1}{|Z|} 2r^{n+2s}\int_{B_{2r}}a(x)dx:=L.
\end{array}
$$
The number $L$ does not depend on $\delta$. In short, we have proved that
$$
\int_{B_{r}(x_{0})}\left|\log\left(1+\frac{u(x)}{\delta}\right)\right|^{2}dx \leq C
$$
for some constant $C>0$ and $C$ does not depend on $\delta$. If $u(x) \neq 0$ then $F_{\delta}(x) \rightarrow \infty$ when $\delta \rightarrow 0$. By Fatou's lemma, if $|B_{r} \cap Z^{c}|>0$,
$$
+\infty \leq \liminf_{\delta \rightarrow 0}\int_{B_{r}\cap Z^{c}}|F_{\delta}(x)|^{2} \leq C.
$$
Therefore $|Z|=|B_{r}|$ and $u=0$ almost everywere in $B_{r}(x_{0})$. Now, we define
$$
A=\left\{B_{r}(x); r>0, x \in \mathbb{R}^{n}, u>0\ in\ B_{r}(x)\right\}
$$
$$
B=\left\{B_{r}(x); r>0, x \in \mathbb{R}^{n}, u=0\  in\ B_{r}(x)\right\}
$$
$$
S=\bigcup_{V\in A}V
$$
and
$$
W=\bigcup_{V\in B}V
$$
$S$ and $W$ are open sets. Consider $x \in \mathbb{R}^{n}$ and $r>0$. We have two possibilities, either $u \neq 0$ in $B_{r}(x)$ or $u=0$ in $B_{r}(x)$. If $u \neq 0$ in $B_{r}$ then $u>0$ in $B_{r}$. In this case, $x \in S$. If $u=0$ in $B_{r}(x)$, then $x \in W$. Consequently
$$
\mathbb{R}^{n}= S \cup W.
$$
By connectedness, we should have $S=\emptyset$ or $W=\emptyset$. If $\mathbb{R}^{n}=S$ then $u>0$ almost everywere in $\mathbb{R}^{n}$. If $\mathbb{R}^{n}=W$ then $u=0$ almost everywere in $\mathbb{R}^{n}$.
\end{proof}

\begin{corollary}\label{cl45}
The solution $u$ found in Theorem \ref{th31} is positive in the following sense, $ u>0$ almost everywere in $\mathbb{R}^{3}$.
\end{corollary}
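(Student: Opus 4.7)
The plan is to obtain this corollary as a direct consequence of Theorem \ref{th44}, after rewriting the Euler--Lagrange equation for $u$ as a weak supersolution of a fractional equation with a nonnegative, locally integrable potential. Let $u$ be the nontrivial critical point of $I$ produced by Theorem \ref{th31}.

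First I would identify the coefficient to play the role of $a$ in Theorem \ref{th44}. Set
$$a(x) := \frac{2}{\zeta(s)}\bigl(V(x)+\phi_u(x)\bigr).$$
By $(V_0)$ we have $V\geq \alpha>0$, and by Lemma \ref{lm1}(ii) we have $\phi_u\geq 0$, so $a\geq 0$ a.e. Since $V$ is continuous (hence locally bounded) and $\phi_u\in \dot{H}^t(\mathbb{R}^3)\hookrightarrow L^{2^\ast_t}(\mathbb{R}^3)\subset L^1_{loc}(\mathbb{R}^3)$, the coefficient $a$ belongs to $L^1_{loc}(\mathbb{R}^3)$.

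Second I would rewrite the identity $I'(u)v=0$ for an arbitrary nonnegative test function $v\in H^s(\mathbb{R}^3)$. Dividing through by $\zeta(s)/2$ yields
$$\int_{\mathbb{R}^3}\!\!\int_{\mathbb{R}^3}\frac{(u(x)-u(y))(v(x)-v(y))}{|x-y|^{3+2s}}\,dx\,dy + \int_{\mathbb{R}^3} a(x)u(x)v(x)\,dx = \frac{2}{\zeta(s)}\int_{\mathbb{R}^3} g(u)v\,dx.$$
Because $g(z)=f(z^+)$ and $(f_1)$ gives $f(z)>0$ for $z>0$, while $g(z)=0$ for $z\leq 0$, we have $g(u)\geq 0$ pointwise; hence the right-hand side is nonnegative. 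Thus $u$ satisfies precisely the weak supersolution hypothesis of Theorem \ref{th44}.

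Third, Theorem \ref{th44} then forces the dichotomy: either $u>0$ almost everywhere in $\mathbb{R}^3$ or $u\equiv 0$ almost everywhere. Since Theorem \ref{th31} guarantees $u$ is nontrivial, the second alternative is ruled out, and we conclude $u>0$ a.e. The only point requiring any attention is the verification that $\phi_u\in L^1_{loc}$, which is immediate from the fractional Sobolev embedding already invoked in Lemma \ref{lm1}; everything else is essentially bookkeeping.
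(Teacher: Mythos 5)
Your proposal is correct and follows essentially the same route as the paper: both define $a(x)=\tfrac{2}{\zeta}\bigl(V(x)+\phi_u(x)\bigr)$, check $a\geq 0$ and $a\in L^1_{loc}$ via $(V_0)$, Lemma \ref{lm1} and the embedding $L^{2^\ast_t}\subset L^1_{loc}$, observe that $\int g(u)v\,dx\geq 0$ for nonnegative $v$, and then invoke Theorem \ref{th44} together with $u\neq 0$. Your remark that $g(u)\geq 0$ because $g(z)=f(z^+)$ vanishes for $z\leq 0$ (using $f(0)=0$ from $(f_2)$) is a slightly more careful justification of a step the paper states without comment.
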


\begin{proof}
For some $v\in H^{s}(\mathbb{R}^{3})$, with $v \geq0$ almost everywere, we have 
$$
\begin{array}{ll}
\frac{(\zeta)}{2}\int_{\mathbb{R}^{3}}\int_{\mathbb{R}^{3}}\frac{(u(x)-u(y))(v(x)-v(y))}{|x-y|^{3+2s}}dxdy&+ \int_{\mathbb{R}^{3}}V(x)uvdx \\&+\int_{\mathbb{R}^{3}}\phi_{u}uvdx=\int_{\mathbb{R}^{3}}g(u)vdx \geq 0.
\end{array}
$$
If we define $a(x)=\frac{2}{\zeta}(V(x)+\phi_{u}(x))$, we have that $a \in L^{1}_{loc}(\mathbb{R}^{3})$, because $L^{2^{\ast}_{t}}(\mathbb{R}^{3})\subset L^{1}_{loc}(\mathbb{R}^{3})$ and $V$ is continuous. By $(V_{0})$ and Lemma \ref{lm1} we have $a(x)>0$ in $\mathbb{R}^{3}$. 
Thereby, 
$$
\int_{\mathbb{R}^{3}}\int_{\mathbb{R}^{3}}\frac{(u(x)-u(y))(v(x)-v(y))}{|x-y|^{3+2s}}dxdy + \int_{\mathbb{R}^{3}}a(x)uvdx \geq 0.
$$
for all $v\in H^{s}(\mathbb{R}^{3})$ with $v\geq0$.
But $u \neq 0$. Then, Theorem \ref{th44} implies that $u>0$ almost everywere in $\mathbb{R}^{3}$.
\end{proof}

\begin{remark}\label{rm46}
Define $\mathcal{N}=\left\{u \in H^{s}(\mathbb{R}^{3})\setminus\left\{0\right\};I'(u)u=0\right\}$, where
$$
\begin{array}{ll}

\begin{array}{ll}I(u)=\frac{\zeta(s)}{4}\int_{\mathbb{R}^{3}}\int_{\mathbb{R}^{3}}&\frac{(u(x)-u(y))^{2}}{|x-y|^{3+2s}}dxdy+\frac{1}{2}\int_{\mathbb{R}^{3}}V(x)u^{2}dx\\
&+\frac{1}{4}\int_{\mathbb{R}^{3}}\phi_{u}u^{2}dx-\int_{\mathbb{R}^{3}}F(u)dx.
\end{array}
\end{array}
$$
If $f$ satisfies $(f_{1})-(f_{5})$ then
$$
I_{\infty}=inf_{u\in \mathcal{N}} I(u)
$$
coincides with the mountain pass level associated with $I$.
\end{remark}

\begin{theorem}
If $f$ satisfies $(f_{1})-(f_{5})$ and $V$ satisfies $(V_{0})$ and $(V_{1})$, then the problem $(P)$ has a ground state solution.
\end{theorem}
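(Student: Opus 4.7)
The plan is to reuse the mountain pass construction of Theorem \ref{th31} and verify that the nontrivial critical point it produces attains the Nehari infimum $I_{\infty}=\inf_{u\in\mathcal{N}}I(u)$. I first apply the Mountain Pass Theorem to obtain a Cerami sequence $\{u_{n}\}\subset H^{s}(\mathbb{R}^{3})$ at the mountain pass level $c$; using the periodicity $(V_{1})$, the translates $w_{n}(x)=u_{n}(x+y_{n})$ with appropriate $y_{n}\in\mathbb{Z}^{3}$ form another Cerami sequence at the same level, still bounded, and (up to a subsequence) converge weakly to some $w_{0}\neq 0$ with $I'(w_{0})=0$. This entire construction is already carried out in the proof of Theorem \ref{th31}, and Corollary \ref{cl45} further guarantees $w_{0}>0$ almost everywhere. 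The periodicity of $V$ is essential in that it makes $I$ invariant under integer translations, so that $\{w_{n}\}$ is indeed a Cerami sequence for $I$ rather than for a translated functional.

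Since $w_{0}$ is a nontrivial critical point, $w_{0}\in\mathcal{N}$, and Remark \ref{rm46} identifies the mountain pass level with $I_{\infty}$, yielding $I(w_{0})\geq c$. The nontrivial step is the reverse inequality $I(w_{0})\leq c$. For this I use the identity
$$
I(u)-\tfrac{1}{4}I'(u)u=\tfrac{1}{4}\|u\|^{2}+\tfrac{1}{4}\int_{\mathbb{R}^{3}}\bigl[g(u)u-4G(u)\bigr]\,dx,
$$
in which the nonlocal term $\int\phi_{u}u^{2}$ cancels. Using $g(u)=f(u^{+})$, $G(u)=\int_{0}^{u}g(s)\,ds$, together with $f(0)=0$ (which follows from $(f_{2})$ and continuity), the integrand equals $H(u)=uf(u)-4F(u)$ on $\{u\geq 0\}$ and vanishes on $\{u<0\}$; Remark \ref{rm1} then guarantees pointwise nonnegativity on $\mathbb{R}^{3}$.

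Passing to a further subsequence so that $w_{n}\to w_{0}$ almost everywhere on $\mathbb{R}^{3}$ (using the compact embedding $H^{s}\hookrightarrow L^{2}_{\mathrm{loc}}$), the weak lower semicontinuity of $\|\cdot\|$ together with Fatou's lemma gives
$$
I(w_{0})=I(w_{0})-\tfrac{1}{4}I'(w_{0})w_{0}\leq \liminf_{n\to\infty}\Bigl[I(w_{n})-\tfrac{1}{4}I'(w_{n})w_{n}\Bigr]=c,
$$
since $I(w_{n})\to c$ and $|I'(w_{n})w_{n}|\leq \|I'(w_{n})\|\,\|w_{n}\|\to 0$ by the Cerami condition and boundedness of $\{w_{n}\}$. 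Combining with $I(w_{0})\geq c=I_{\infty}$ yields $I(w_{0})=I_{\infty}$, so $w_{0}$ is a ground state solution. The delicate point is this upper bound: the nonlocal term $\int\phi_{u}u^{2}$ is not obviously lower semicontinuous under weak convergence alone, and it is precisely the subtraction of $\tfrac{1}{4}I'(u)u$ from $I(u)$ — made possible by the control on $H$ provided by $(f_{5})$ via Remark \ref{rm1} — that circumvents this difficulty.
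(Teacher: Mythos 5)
Your proposal is correct and follows essentially the same route as the paper: both obtain the nontrivial critical point $w_{0}$ from the translated Cerami sequence of Theorem \ref{th31}, then apply Fatou's lemma and weak lower semicontinuity of the norm to the quantity $4I(w_{n})-I'(w_{n})w_{n}=\|w_{n}\|^{2}+\int H(w_{n})$ to get $I(w_{0})\leq c$, and invoke Remark \ref{rm46} together with $w_{0}\in\mathcal{N}$ for the reverse inequality. Your added remarks on why the integrand $g(u)u-4G(u)$ is pointwise nonnegative and why the nonlocal term must be eliminated before passing to the limit are consistent with, and slightly more explicit than, the paper's argument.
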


\begin{proof}
Taking the following Euler-Lagrange functional
$$
\begin{array}{cccl}
I:&H^{s}(\mathbb{R}^{3})&\longrightarrow&\mathbb{R}\\
&u& \longmapsto &
\begin{array}{ll}\frac{\zeta(s)}{4}\int_{\mathbb{R}^{3}}\int_{\mathbb{R}^{3}}&\frac{(u(x)-u(y))^{2}}{|x-y|^{3+2s}}dxdy+\frac{1}{2}\int_{\mathbb{R}^{3}}V(x)u^{2}dx\\
&+\frac{1}{4}\int_{\mathbb{R}^{3}}\phi_{u}u^{2}dx-\int_{\mathbb{R}^{3}}F(u)dx,
\end{array}
\end{array}
$$
and following with the same ideas of Theorem \ref{th31}, we prove that there is a nonzero solution $u$ to the system $(P)$. Also, we prove that there is a Cerami's sequence $\left\{w_{n}\right\}$ in the montain pass level associated with $I$ converging to $u$. By Remark \ref{rm1} and Fatou's lemma
$$
\begin{array}{ll}
4c&= \liminf_{n \rightarrow \infty}\left(4I(w_{n})-I'(w_{n})w_{n}\right) \\
& = \liminf_{n \rightarrow \infty}\left(||w_{n}||^{2}+ \int_{\mathbb{R}^{3}}H(w_{n})dx\right) \\
& \geq  \liminf_{n \rightarrow \infty}||w_{n}||^{2}+  \liminf_{n \rightarrow \infty}\int_{\mathbb{R}^{3}}H(w_{n})dx \\
& \geq ||u||^{2}+\int_{\mathbb{R}^{3}}H(u)dx \\
& = 4I(u)-I'(u)u \\
& = 4I(u).
\end{array}
$$
where $H(u)=uf(u)-4F(u)$.
By definition $u \in \mathcal{N}$. Then $I(u) \leq \inf_{u\in \mathcal{N}}I(u)$. By Remark \ref{rm46}
$$
I(u)=\inf_{u\in \mathcal{N}}I(u).
$$
\end{proof}

\section{Asymptotically Periodic Potential}
In this section, we study the problem $(P)$, when we consider $V$ satisfying the condition $(V_{0})$ and
\begin{itemize}
\item[$(V_3)$\ ] There is a function $V_{p}$ satisfying $(V_{1})$ such that
$$
\lim_{|x|\rightarrow \infty}|V(x)-V_{p}(x)|=0;
$$
\item[$(V_4)$\ ] $V(x)\leq V_{p}(x)$ and there is a open set $\Omega \subset \mathbb{R}^{3}$ with $|\Omega|>0$ and $V(x)< V_{p}(x)$ in $\Omega$.
\end{itemize}
Here $V_{p}$ is a periodic continuous potential. This case follows the same ideas already studied in Schr\"odinger-Poisson system with asymptotically periodic potential in \cite{amsss}. We are writing this case to make a most complete work for the reader.
\begin{theorem}
Suppose that $V$ satisfies $(V_{0})$, $(V_{3})$, $(V_{4})$ and $f$ satisfies $(f_{1})-(f_{5})$. Then, the problem $(P)$ has a ground state solution.
\end{theorem}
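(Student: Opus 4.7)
The plan is to adapt the argument of Theorem~\ref{th31} to the asymptotically periodic functional $I$, using an auxiliary periodic functional $I_p$ (obtained by replacing $V$ with $V_p$ in the expression of $I$) as a comparison object. Both $I$ and $I_p$ satisfy the mountain pass geometry checked in the proof of Theorem~\ref{th31}; denote their mountain pass levels by $c$ and $c_p$ and their respective Nehari manifolds by $\mathcal N_I$ and $\mathcal N_p$. By Remark~\ref{rm46} applied to each functional, $c=\inf_{\mathcal N_I}I$ and $c_p=\inf_{\mathcal N_p}I_p$. A Cerami sequence $\{u_n\}$ for $I$ at level $c$ exists, and the bound $4I(u_n)-I'(u_n)u_n\ge\|u_n\|^2$ used in Theorem~\ref{th31} still applies, so $u_n\rightharpoonup u$ in $H^s(\mathbb R^3)$ for some $u$ with $I'(u)=0$.

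The first crucial step is to establish the strict inequality $c<c_p$. Let $u_p$ be the (by Corollary~\ref{cl45}) positive ground state of the periodic problem produced by the previous theorem, so $u_p\in\mathcal N_p$ and $I_p(u_p)=c_p$. Under $(f_5)$ the fibering map $t\mapsto I(tu_p)$, $t>0$, attains a unique positive maximum at some $t_0$ with $t_0u_p\in\mathcal N_I$. Using $V\le V_p$ in $\mathbb R^3$ and $V<V_p$ on the open set $\Omega$ of positive measure on which $u_p^2>0$ a.e.,
\begin{equation*}
c\;\le\;I(t_0u_p)\;=\;I_p(t_0u_p)+\tfrac{t_0^{2}}{2}\!\int_{\mathbb R^3}(V-V_p)u_p^{2}\,dx\;<\;I_p(t_0u_p)\;\le\;I_p(u_p)=c_p,
\end{equation*}
where the last inequality uses that $u_p\in\mathcal N_p$ forces $t=1$ to maximise $t\mapsto I_p(tu_p)$.

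The second and main step is to show $u\ne 0$. Suppose otherwise. The vanishing alternative is ruled out as in Theorem~\ref{th31} (it would force $\|u_n\|\to 0$ and hence $c=0$), so via Lemma~\ref{l1.3} there exist $R,\delta>0$ and $\{y_n\}\subset\mathbb Z^3$ with $\int_{B_R(y_n)}u_n^{2}\ge\delta$, and necessarily $|y_n|\to\infty$ (otherwise local compactness would give $u\ne 0$). Set $w_n(x):=u_n(x+y_n)$. Translation invariance of the Gagliardo seminorm, of the $L^p$-norms and of the Poisson term (Lemma~\ref{lm1}(iv)), together with $V_p(\cdot+y_n)=V_p(\cdot)$ by periodicity on $\mathbb Z^3$, yield
\begin{equation*}
I_p(w_n)-I(u_n)=\tfrac{1}{2}\!\int_{\mathbb R^3}\bigl(V_p(z+y_n)-V(z+y_n)\bigr)w_n^{2}(z)\,dz,
\end{equation*}
and an analogous identity for the derivatives. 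By $(V_3)$, splitting this integral into $\{|z+y_n|\le R_0\}$ (handled by the strong local $L^2$ convergence $u_n\to 0$) and its complement (on which $|V-V_p|<\varepsilon$ bounded by $\varepsilon\|w_n\|_{2}^{2}$) shows $I_p(w_n)\to c$ and $(1+\|w_n\|)I_p'(w_n)\to 0$. Thus $\{w_n\}$ is a bounded Cerami sequence for $I_p$ at level $c$; up to a subsequence $w_n\rightharpoonup w_0\ne 0$ in $H^s$ (the $L^2$ mass in $B_R(0)$ is preserved), $w_0\in\mathcal N_p$, and the Fatou argument from Theorem~\ref{th31} gives $c_p\le I_p(w_0)\le\liminf I_p(w_n)=c$, contradicting $c<c_p$.

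Therefore $u\ne 0$, and applying the same Fatou argument directly to $\{u_n\}$ produces $I(u)\le c$; combined with $u\in\mathcal N_I$ and $c=\inf_{\mathcal N_I}I$ this forces $I(u)=c$, so $u$ is a ground state of $(P)$. The main obstacle is precisely the coupling of the strict comparison $c<c_p$ with the translation argument that transfers the Cerami sequence from $I$ to $I_p$: without that strict inequality, the escape-at-infinity scenario $u=0$ could not be excluded, since the limiting bubble at $|y_n|\to\infty$ is naturally governed by the periodic problem.
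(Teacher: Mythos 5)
Your proposal is correct, and it establishes the strict inequality $c<c_p$ exactly as the paper does (projecting the periodic ground state onto the Nehari manifold of $I$ and exploiting $(V_4)$). Where you genuinely diverge is in ruling out $u=0$. The paper works with the Nehari projections $s_nu_n\in\mathcal N_p$ of the \emph{untranslated} Cerami sequence: it first shows $\limsup s_n\le 1$ (via $(f_5)$, Fatou and a translated nonvanishing limit), then $s_n>1$ eventually (via monotonicity of $H(u)=uf(u)-4F(u)$ and the comparison $4c_p\le 4c$), concludes $s_n\to 1$, and finally gets $c_p\le I_p(s_nu_n)=I_p(u_n)+o_n(1)=I(u_n)+o_n(1)\to c$. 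You instead translate the sequence itself, show $\{w_n\}$ is a Cerami sequence for the \emph{periodic} functional $I_p$ at the same level $c$ (using the paper's limit $\int|V-V_p|u_n^2\to 0$, which controls both the energies and, since $V-V_p$ is bounded, the derivatives in the dual norm), extract a nontrivial critical point $w_0\in\mathcal N_p$, and apply the $4I_p-I_p'$ Fatou argument to get $c_p\le I_p(w_0)\le c$. Both routes terminate in the same contradiction with $c<c_p$. Your route is shorter and avoids the two-sided analysis of $s_n$, at the cost of having to verify that the translated sequence really is a Cerami sequence for $I_p$ (the operator-norm estimate on $I_p'(w_n)-$"translated $I'(u_n)$", which you only sketch with "an analogous identity for the derivatives" but which does go through); the paper's route only ever needs the scalar quantities $I_p(u_n)-I(u_n)$ and $I_p'(u_n)u_n-I'(u_n)u_n$. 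One small caveat common to both arguments: the strict inequality $I(t_0u_p)<I_p(t_0u_p)$ requires $u_p\neq 0$ on a positive-measure subset of $\Omega$; you justify this via positivity of the periodic ground state, which is cleaner than the paper's unstated assumption, though strictly speaking Corollary \ref{cl45} is proved for the solution of the truncated problem with $g=f(u^+)$ rather than for the ground state built with $F$.
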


\begin{proof}
We can  define in  $H^{s}(\mathbb{R}^{3})$ the norm,
$$
||u||_{p}=\left(\frac{\zeta}{2}\int_{\mathbb{R}^{3}}\int_{\mathbb{R}^{3}}\frac{(u(x)-u(y))^{2}}{|x-y|^{3+2s}}dxdy+\int_{\mathbb{R}^{3}}V_{p}(x)u^{2}dx\right)^{\frac{1}{2}}.
$$
Consider the functional $I_{p}$ 
$$
I_{p}(u)=\frac{1}{2}||u||_{p}^{2}+\frac{1}{4}\int_{\mathbb{R}^{3}}\phi_{u}u^{2}dx - \int_{\mathbb{R}^{3}}F(u)dx.
$$
We claim that there is $w_{p} \in H^{s}(\mathbb{R}^{3})$ such that $I_{p}'(w_{p})=0$ and $I_{p}(w_{p})=c_{p}$, where $c_{p}$ is the mountain pass level associated with $I_{p}$. 
We will consider another norm in  $H^{s}(\mathbb{R}^{3})$.
$$
||u||=\left(\frac{\zeta}{2}\int_{\mathbb{R}^{3}}\int_{\mathbb{R}^{3}}\frac{(u(x)-u(y))^{2}}{|x-y|^{3+2s}}dxdy+\int_{\mathbb{R}^{3}}V(x)u^{2}dx\right)^{\frac{1}{2}}.
$$
Then, we define 
$$
I(u)=\frac{1}{2}||u||^{2}+\frac{1}{4}\int_{\mathbb{R}^{3}}\phi_{u}u^{2}dx - \int_{\mathbb{R}^{3}}F(u)dx.
$$
The functional $I$  has a mountain pass geometry. If $c$ is the mountain pass level associated with $I$ then $c<c_{p}$. Indeed, there is a $t_{\ast}$ such that $t_{\ast}w_{p} \in \mathcal{N}$ (see remark \ref{rm46}) and it is the unique with this property. Then 
$$
\begin{array}{ll}
c &\leq I(t^{\ast}w_{p})\\
&<I_{p}(t^{\ast}w_{p}) \\
& \leq \max_{t\geq 0}I_{p}(tw_{p}) \\
& = I_{p}(w_{p})=c_{p}
\end{array}
$$ Consider $\{u_{n}\}_{n \in \mathbb{N}}$ a Cerami's sequence at the mountain pass level $c$ associated with $I$. Similarly to the periodic case, we prove that the sequence $\{u_{n}\}$ is bounded and therefore, converges weakly to $u \in H^{s}(\mathbb{R}^{3})$. Additionally $I'(u)=0$. Now we will prove that $u \neq 0$. Suppose that $u=0$. %Then 
%$$
%c=\inf_{u \in \mathcal{N}_{I}}I(u)
%$$
Regarding the sequence $\left\{u_{n}\right\}$, the following equalities are true
$\newline$
\begin{enumerate}
\item $\lim\limits_{n \rightarrow \infty}\int_{\mathbb{R}^{3}}|V(x)-V_{p}(x)|u_{n}^{2}dx= 0$
\item $\lim\limits_{n \rightarrow \infty}|||u_{n}||-||u_{n}||_{p}|=0$.
\item $\lim\limits_{n \rightarrow \infty}|I_{p}(u_{n})-I(u_{n})|=0$
\item $\lim\limits_{n \rightarrow \infty}|I_{p}'(u_{n})u_{n}-I'(u_{n})u_{n}|=0$
\end{enumerate}
We will prove (1). The limits (2), (3) and (4) are immediate consequences of (1). Consider $\epsilon>0$ and $A>0$ such that $||u_{n}||_{2}^{2}< A$ for all $n \in \mathbb{N}$.  By $(V_{3})$, there is $R>0$ such that, for all $|x|>R$ we have
$$
|V(x)-V_{p}(x)|< \frac{\epsilon}{2A}.
$$
But $\{u_{n}\}$ converges weakly to $u=0$. Then $u_{n}\rightarrow 0$ in $L^{2}(B_{R}(0))$. This convergence implies that there is $n_{0} \in \mathbb{N}$ such that
$$
\int_{B_{R}(0)}|V(x)-V_{p}(x)|u_{n}^{2}dx< \frac{\epsilon}{2}
$$ 
for all $n \geq n_{0}$. Then, if $n\geq n_{0}$
$$
\begin{array}{ll}
&\int_{\mathbb{R}^{3}}|V(x)-V_{p}(x)|u_{n}^{2}dx\\
&=\int_{B_{R}(0)}|V(x)-V_{p}(x)|u_{n}^{2}dx+\int_{(B_{R}(0))^{c}}|V(x)-V_{p}(x)|u_{n}^{2}dx \\
&<\frac{\epsilon}{2}+\frac{\epsilon}{2}=\epsilon.
\end{array}
$$
\newline
Consider $s_{n}>0$ such that 
$$
s_{n}u_{n} \in \mathcal{N}_{p}
$$
for every $n \in \mathbb{N}$.
Where $N_{p}=\left\{u \in H^{s}(\mathbb{R}^{3})\setminus\left\{0\right\};I_{p}'(u)u=0\right\}$. We claim that $\limsup_{n \rightarrow \infty}s_{n}\leq 1$. Indeed, otherwise, there is $\delta>0$ such that, passing to a subsequence if necessary, we can assume that $s_{n}\geq 1+\delta$ for all $n \in \mathbb{N}$. By $(4)$ we have $I_{p}'(u_{n})u_{n}\rightarrow 0$, that is,
$$
\begin{array}{ll}
||u_{n}||_{p}^{2}+\int_{\mathbb{R}^{3}}\phi_{u_{n}}u_{n}^{2}dx = \int_{\mathbb{R}^{3}}f(u_{n})u_{n}dx+o_{n}(1)
\end{array}
$$
From $s_{n}u_{n}\in \mathcal{N}_{p}$ we have $I_{p}'(s_{n}u_{n})u_{n}=0.$ Equivalently
$$
s_{n}||u_{n}||_{p}^{2}+s_{n}^{3}\int_{\mathbb{R}^{3}}\phi_{u_{n}}u_{n}^{2}dx = \int_{\mathbb{R}^{3}}f(s_{n}u_{n})u_{n}dx
$$
Therefore
\begin{equation}\label{eq53}
\int_{\mathbb{R}^{3}}\left[\frac{f(s_{n}u_{n})}{(s_{n}u_{n})^{3}}-\frac{f(u_{n})}{(u_{n})^{3}}\right]u_{n}^{4}dx = \left(\frac{1}{s_{n}^{2}}-1\right)||u_{n}||_{p}^{2}+o_{n}(1) \leq o_{n}(1).
\end{equation}
If $\{u_{n}\}_{n \in \mathbb{N}}$ converges to $0$ in  $L^{q}(\mathbb{R}^{3})$ for all $q \in (2,2^{\ast}_{s})$, then by Lemma \ref{lm1}
$$
||u_{n}||^{2}\leq||u_{n}||^{2}+\int_{\mathbb{R}^{3}}\phi_{u_{n}}u_{n}^{2}dx = \int_{\mathbb{R}^{3}}f(u_{n})u_{n}+I'(u_{n})u_{n}
$$
consequently $\{u_{n}\}$ would have limit $0$ in $H^{s}(\mathbb{R}^{3})$ and this would contradict the fact that $c>0$. Therefore, there is a sequence $\{y_{n}\} \subset \mathbb{Z}^{n}$, $R>0$ and $\beta>0$ such that
$$
\int_{B_{R}(y_{n})}u_{n}^{2}dx \geq \beta>0
$$
Taking $v_{n}(x):=u_{n}(x+y_{n})$ we have $||v_{n}||=||u_{n}||$ and therefore we can assume that $\{v_{n}\}_{n \in \mathbb{N}}$ converges weakly to some $v \in H^{s}(\mathbb{R}^{3})$. Note that
$$
\int_{B_{R}(0)}v^{2}dx \geq \beta>0
$$
The inequality $(\ref{eq53})$, Remark \ref{rm1} and Fatou's lemma imply that
$$
\begin{array}{ll}
0<&\int_{\mathbb{R}^{3}}\left[\frac{f((1+\delta)v)}{((1+\delta)v)^{3}}-\frac{f(v)}{(v)^{3}}\right]v^{4}dx \\
& \leq \liminf_{n \rightarrow \infty}\int_{\mathbb{R}^{3}}\left[\frac{f((1+\delta)v_{n})}{((1+\delta)v_{n})^{3}}-\frac{f(v_{n})}{(v_{n})^{3}}\right]v_{n}^{4}dx \\
& \leq \liminf_{n \rightarrow \infty}\int_{\mathbb{R}^{3}}\left[\frac{f(s_{n}v_{n})}{(s_{n}v_{n})^{3}}-\frac{f(v_{n})}{(v_{n})^{3}}\right]v_{n}^{4}dx \\
& =  \liminf_{n \rightarrow \infty}\int_{\mathbb{R}^{3}}\left[\frac{f(s_{n}u_{n})}{(s_{n}u_{n})^{3}}-\frac{f(u_{n})}{(u_{n})^{3}}\right]u_{n}^{4}dx \\
& \leq \liminf_{n \rightarrow \infty}o_{n}(1)=0.
\end{array}
$$
The last inequality is a contradiction. Therefore $\limsup_{n \rightarrow \infty}s_{n}\leq 1$. Now, we will prove that for $n$ large enough, $s_{n}> 1$. Suppose that the statement is false. In this case, passing to a subsequence if necessary, we can assume that $s_{n}\leq 1$ for all $n \in \mathbb{N}$. Note that by $(f_{5})$, the function $H(u):=uf(u)-4F(u)$ is increasing in $|u| \neq 0$. Then
$$
\begin{array}{ll}
4c_{p}&=4\inf_{u \in N_{p}}I_{p}(u) \\
& \leq 4I_{p}(s_{n}u_{n}) \\
& = 4I_{p}(s_{n}u_{n}) - I_{p}'(s_{n}u_{n})(s_{n}u_{n})\\
& = s_{n}^{2}||u_{n}||_{p}^{2}+\int_{\mathbb{R}^{3}}f(s_{n}u_{n})(s_{n}u_{n})-4F(s_{n}u_{n})dx \\
& \leq ||u_{n}||_{p}^{2}+\int_{\mathbb{R}^{3}}f(u_{n})(u_{n})-4F(u_{n})dx \\
& \leq 4I(u_{n})-I'(u_{n})u_{n}+\int_{\mathbb{R}^{3}}|V(x)-V_{p}(x)|u_{n}^{2}dx.
\end{array}
$$
This implies that
$$
4c_{p} \leq 4c.
$$
But, this last inequality is false, because we have proved that $c<c_{p}$. Therefore, we have that $s_{n}> 1$ for $n$ large enough. Then, about the sequence $\left\{s_{n}\right\}$ we have proved that
$$
1 \leq \liminf_{n \rightarrow \infty}s_{n} \leq \limsup_{n \rightarrow \infty}s_{n} \leq 1.
$$
and therefore
\begin{equation}\label{eq54}
\lim\limits_{n \rightarrow \infty}s_{n}=1.
\end{equation}
The Fundamental Theorem of Calculus implies that
\begin{equation}\label{eq55}
\begin{array}{ll}
\int_{\mathbb{R}^{3}}F(s_{n}u_{n})dx-\int_{\mathbb{R}^{3}}F(u_{n})dx = \int_{1}^{s_{n}}\left[\int_{\mathbb{R}^{3}}f(\tau u_{n})u_{n}dx \right] d \tau.
\end{array}
\end{equation}
Also, by $(f_{3})$ we obtain $C>0$ such that
\begin{equation}\label{eq56}
\int_{\mathbb{R}^{3}}f(\tau u_{n})u_{n}dx \leq C(s_{n}||u_{n}||^{2}+s_{n}^{p-1}||u_{n}||^{p}).
\end{equation}
for all $\tau \in (1,s_{n})$.
We have that the sequence $\{u_{n}\}$ is bounded. Then, by (\ref{eq54}), (\ref{eq55}) and (\ref{eq56})
$$
\int_{\mathbb{R}^{3}}F(s_{n}u_{n})dx-\int_{\mathbb{R}^{3}}F(u_{n})dx = o_{n}(1).
$$
Then
$$
\begin{array}{ll}
&I_{p}(s_{n}u_{n})-I_{p}(u_{n})\\
& = \frac{(s_{n}^{2}-1)}{2}||u_{n}||^{2}+\frac{(s_{n}^{4}-1)}{4}\int_{\mathbb{R}^{3}}\phi_{u_{n}}u_{n}^{2}dx - \int_{\mathbb{R}^{3}}F(s_{n}u_{n})dx+\int_{\mathbb{R}^{3}}F(u_{n})dx \\
& = o_{n}(1)
\end{array}
$$
because $\{u_{n}\}$ is bounded and 
$\int_{\mathbb{R}^{3}}\phi_{u_{n}}u_{n}^{2}dx = ||\phi_{u_{n}}||_{\dot{H}^{t}(\mathbb{R}^{3})}^{2} \leq C||u_{n}||^{4}$.
By $(3)$ 
$$
\begin{array}{ll}
c_{p}&\leq I_{p}(s_{n}u_{n}) \\
& = I_{p}(u_{n})+o_{n}(1).\\
&= I(u_{n})+o_{n}(1)
\end{array}
$$
Taking $n \rightarrow \infty$ we obtain
$$
c_{p} \leq c
$$
But, this last inequality is false, because we have proved that $c<c_{p}$. This contradiction was generated because we assumed that $u = 0$. It follows that $u$ is nontrivial. 
Particularly
$$
I(u) \geq \inf_{u \in \mathcal{N}}I(u).
$$
As in the periodic case
$$
I(u)\leq c=\inf_{u \in \mathcal{N}}I(u).
$$
Therefore $u$ is a ground state solution for the system $(P)$.
\end{proof}
 \vspace{1mm}
\begin{bibdiv}
\begin{biblist}

\bib{as}{article}{
     author={Alves, C.},
     author={Souto, M.},
     title={On existence of solution for a class of semilinear elliptic equations wit nonlinearities that lies between two different powers },
     journal={Abst. and Appl. Analysis},
     volume={ID 578417},
     date={2008},
     pages={1--7},
     review={ }
     }
     
\bib{amsss}{article}{
          author={Alves, C.},
          author={Souto, M.},
          author={Soares, S.}
          title={Schr\"odinger-Poisson equations without Ambrosetti-Rabinowitz condition},
          journal={J. Math. Anal Appl},
          volume={377},
          date={2011},
          pages={584--592},
          review={ }
          }
          
     \bib{ap}{article}{
     author={Azzollini, A.},
     author={Pomponio, A.},
     title={Ground state solutions for the nonlinear Schr\"odinger-Maxwell equations },
     journal={J. Math. Appl.},
     volume={14},
     date={2008},
     pages={--},
     review={doi:10.1016/jmaa.2008.03.057 }
     }
\bib{bf2}{article}{
               author={Brasco, L.},
               author={Franzina, G.},
               
               title={Convexity Properties of Dirichlet Integrals and Picone-Type Inequalities},
               journal={Kodai Math. J.},
               volume={37},
               date={2014},
               pages={769-799},
               review={}
               }
          \bib{dkp}{article}{
               author={Di Castro, A.},
               author={Kuusi, T.},
               author={Palatucci, G.},
               title={Local behavior of fractional p-minimizers},
               journal={Annales de l'Institut Henri Poincare (C) Non Linear Analysis},
               volume={},
               date={2015},
               pages={},
               review={}
               }
\bib{dpv}{article}{
     author={Di Nezza, E.},
     author={Palatucci, G.},
     author={Valdinoci, E.},
     title={Hitchhiker's guide to the fractional Sobolev spaces},
     journal={Bull. Sci. Math.},
     volume={136},
     date={2012},
     pages={512-573},
     review={ }
     }
               
               \bib{Ek}{article}{
                    author={Ekeland, I.},
                    title={Convexity Methods in Hamilton Mechanics},
                    journal={Springer Verlag},
                    volume={},
                    date={1990},
                    pages={},
                    review={}
                     }
\bib{W}{article}{
     author={Evans, L. C.},
     title={Partial Differential Equations},
     journal={American Mathematical Society},
     date={2010}
     }

        \bib{G}{article}{
       author={Gaetano, S.},
       title={Multiple positive solutions for a  Schr\"odinger-Poisson-Slater system },
       journal={J. Math. Analysis and Appl., Issue 1},
       volume={365},
       date={2010},
       pages={288--299},
       review={doi:10.1016/j.jmaa.2009.10.061 }
       }
      \bib{gsd}{article}{
     author={Gaetano, S.},
     author={Squassina, M.},
     author={D'avenia, P.},
     title={On Fractional Choquard Equations },
     journal={Math. Models Methods Appl. Sci.},
     volume={25},
     date={2015},
     pages={1447-1476},
     review={}
     }

     \bib{bf}{article}{
     author={A. R. Giammetta},
     title={Fractional Schr\"odinger-Poisson-Slater system in one dimension},
     journal={arXiv:1405.2796v1.}
     volume={}
     date={}
     pages={}
     review={}
     }

\bib{jj}{article}{
     author={Jeanjean, L.},
     title={On the existence of bounded Palais-Smale sequences and application to a Landesman-Lazer type problem set on $\mathbb R^3$},
     journal={Proc. Roy, Soc. Edinburgh, A},
     volume={129},
     date={1999},
     pages={787--809},
     review={}
      }

\bib{L1}{article}{
     author={Lions, P. L.},
     title={The concentration-compactness principle in the calculus of variations. The locally compact case, part 2},
     journal={Analyse Nonlinéaire},
     volume={I},
     date={1984},
     pages={223--283},
     review={}
      }

      \bib{Ruiz}{article}{
     author={Ruiz, D.},
     title={The Schr\"odinger-Poisson equation under the effect of a nonlinear local term },
     journal={J. Funct. Analysis},
     volume={237},
     date={2006},
     pages={655--674},
     review={doi:10.1016/j.jfa.2006.04.005 }
     }

\bib{W}{article}{
     author={Willem, M.},
     title={Minimax Theorems},
     journal={Birkhauser},
     date={1986}}

\bib{zhang}{article}{
     author={Zhang, J.},,
     title={Existence and Multiplicity results for the Fractional Schr\"odinger-Poisson Systems},
     journal={arXiv:1507.01205v1.},
     volume={},
     date={},
     pages={},
     review={ }
     }
     \bib{zjs}{article}{
          author={Zhang, J.},
          author={Do Ó, J. M,},
          author={Squassina, M.}
          title={Fractional Schr\"odinger-Poisson Systems with a general subcritical or critical nonlinearity},
          journal={Adv. Nonlinear Stud.},
          volume={16},
          date={2016},
          pages={15-30},
          review={}
          }
          
     \bib{zjs2}{article}{
          author={Zhang, J.},
          author={Do Ó, J. M,},
          author={Squassina, M. }
          title={Schr\"odinger-Poisson systems with a general critical nonlinearity},
          journal={Communications in Contemporary Mathematics},
          volume={},
          date={2015},
          pages={},
          review={}
          }
          
\bib{zz}{article}{
     author={Zhao, F.},
     author={Zhao, L.},
     title={Positive solutions for the nonlinear Schr\"odinger-Poisson equations with the critical exponent },
     journal={Nonlinear Analysis, Theory, Meth. and Appl.},
     volume={},
     date={2008},
     pages={--},
     review={doi:10.1016/na.2008.02.116 }
     }
     
 \end{biblist}
 \end{bibdiv}

\end{document}